\newcommand{\m}{\mathfrak}
\newtheorem{thm}{Theorem}[section]
\newtheorem{cor}[thm]{Corollary}
\newtheorem{prop}[thm]{Proposition}
\newtheorem{lem}[thm]{Lemma}
\theoremstyle{definition}
\newtheorem{defn}[thm]{Definition}
\newtheorem{exas}[thm]{Example}
\newtheorem{rem}[thm]{Remark}
\let\phi\varphi
\begin{document}
\title{On the homological dimensions of Leavitt path algebras with coefficients in
commutative rings}
\maketitle
\begin{center}
V.~Lopatkin\footnote{South China Normal University, Guangzhou, China. E-mail address: \texttt{wickktor@gmail.com}} and
T.\,G.~Nam\footnote{Institute of Mathematics, VAST, 18 Hoang Quoc Viet, Cau Giay, Hanoi, Vietnam. E-mail address: \texttt{tgnam@math.ac.vn}} 
\end{center}

\begin{abstract} In this paper, we give sharp bounds for the homological
dimensions of the Leavitt path algebra $L_K(E)$ of a finite graph $E$ with coefficients
in a commutative ring $K$, as well as establish a formula for calculating the homological
dimensions of $L_K(E)$ when $K$ is a commutative unital algebra over a field.
\medskip

\textbf{Mathematics Subject Classifications}: 16S99, 18G05, 05C25

\textbf{Key words}: Gr\"obner--Shirshov basis; Leavitt path algebra; global dimension; weak global dimension.

\end{abstract}

\section{Introduction}
Throughout this note, all rings are nonzero, associative with identity and
all modules are unitary, unless otherwise stated. The categories of left modules
over a ring $R$ is denoted by ${R}\mbox{-}\mathrm{Mod}$.

Given a (row-finite) directed graph $E$ and a field $K$, Abrams and Aranda Pino in \cite{ap:tlpaoag05}, and independently Ara, Moreno, and Pardo in \cite{amp:nktfga}, introduced the \emph{Leavitt path algebra} $L_K(E)$. These Leavitt path algebras generalize the Leavitt algebras $L_K(1, n)$ of \cite{leav:tmtoar}, and also contain many other interesting classes of algebras. In addition, Leavitt path algebras are intimately related to graph $C^*$-algebras (see \cite{r:ga}). Later, in \cite{tomf:lpawciacr} Tomforde generalized the construction of Leavitt path algebras by replacing the field with a commutative ring.

In \cite[Theorem 3.5]{amp:nktfga} Ara, Moreno and Pardo established an important result that the global dimension of the Leavitt path algebra of a finite graph with coefficients a field is at most equal $1$, by using Bergman's nice machinery \cite{b:casurc}. Continuously of this research direction, in this paper, we establish sharp bounds of the homological dimensions of Leavitt path algebras of finite graphs with coefficients in commutative rings, as well as calculate exactly the homological dimensions of those Leavitt path algebras with coefficients in commutative unital algebras over fields. Here our method is different from Ara et. al.'s method, since Bergman's machinery can not work on algebras over commutative rings in general. As an application of the obtained result, we may cover Ara et. al.'s result cited above.

More precisely, the paper is organized as follows. In Section 2, we note that  the classical Composition--Diamond lemma for free associate algebras over fields has, in some cases, the same form for free associate algebras over an arbitrary ring (Theorem \ref{CDA}). For the reader's convenience, all subsequently necessary notions and notations on graph, Leavitt path algebras are included in Section 3. Also, in this section, as an application of Theorem \ref{CDA}, we express an analog of Zelmanov et. al.'s result \cite[Theorem 1]{aajz:lpaofgkd} for Leavitt path algebras with coefficients in commutative rings (Theorem \ref{B-LPA}). Applying the obtained result, we establish an analogous of  Abrams, Aranda Pino and Siles Molina's theorem \cite[Theorems 3.8 and 3.10]{apm:lflpa} for the structure of the Leavitt path algebra of a finite no-exit graph with coefficients in a commutative ring (Corollary \ref{cor3.5}).

In Section 4, we give sharp bounds for the homological dimensions of the Leavitt path algebra
of a finite graph with coefficients in a commutative ring, by using W. Dicks's interesting
result \cite[Corollary 7]{dicks:mvpocor} and Morita equivalence (Theorem \ref{thm4.7}).
Also, we provide a formula for calculating the homological dimensions of the Leavitt path
algebra of a finite graph with coefficients in a commutative unital algebra over a field (Theorem \ref{thm4.8}).


\section{Composition--Diamond lemma for associative algebras}

Here we present the concepts of Composition--Diamond lemma and Gr\"obner--Shirshov basis, see the survey \cite{BokSurv}. In the classical version of Composition--Diamond lemma, it assumed that considered algebras is over a field, here we consider the general case.

Let $K$ be an arbitrary commutative ring with unit, $K \langle X \rangle$ the free associative algebra over $K$ generated by $X$, and let $X^*$ be the free monoid generated by $X$, where empty word is the identity, denoted by $\mathbf{1}_{X^*}$.
Assume that $X^*$ is a well-ordered set. Take $f\in K \langle X \rangle$ with the leading word (term) $\overline{f}$ and $f= \kappa\overline{f} + r_{f}$, where $0\neq \kappa\in K$ and $\overline{r_f}<\overline{f}$. We call $f$ is
\emph{monic} if $\kappa = 1$. We denote by $\mathrm{deg}(f)$ the degree of $\overline{f}$.

\smallskip

\par A well ordering $\leqslant$ on $X^*$ is called {\it monomial} if for $u,v \in X^*$, we have:
\[
u\leqslant v \Longrightarrow \bigl.w\bigr|_u \leqslant \bigl.w\bigr|_v, \qquad \forall w \in X^*,
\]
where $\bigl.w\bigr|_u : = \bigl.w\bigr|_{x \to u}$ and $x$'s are the same individuality of the letter $x \in X$ in $w$.

A standard example of monomial ordering on $X^*$ is the deg-lex ordering (i.e., degree and lexicographical), in which two words are compared first by the degree and then lexicographically, where $X$ is a well-ordering set.

Fix a monomial ordering $\leqslant$ on $X^*$, and let $\varphi$ and $\psi$ be two monic polynomials in $K\langle X\rangle$. There are two kinds of compositions:
\begin{itemize}
\item[(i)] If $w$ is a word (i.e, it lies in $X^*$) such that $w = \overline{\varphi} b = a \overline{\psi}$ for some $a,b\in X^*$
with $\mathrm{deg}(\overline{\varphi})+\mathrm{deg}(\overline{\psi}) >\mathrm{deg}(w)$, then the polynomial $(\varphi,\psi)_w:=\varphi b - a \psi$ is called the {\it intersection composition} of $\varphi$ and $\psi$ with respect to $w$.
\item[(ii)] If $w = \overline{\varphi} = a\overline{\psi} b$ for some $a,b \in X^*$, then the polynomial $(\varphi,\psi)_w:=\varphi -a\psi b$ is called the {\it inclusion composition} of $\varphi$ and $\psi$ with respect to $w$.
\end{itemize}

We then note that $\overline{(\varphi,\psi)_w}\leq w$ and $(\varphi,\psi)_w$ lies in the ideal $(\varphi,\psi)$ of $K\langle X\rangle$ generated by $\varphi$ and $\psi$.

Let $\mathfrak{S}\subseteq K\langle X \rangle$ be a monic set (i.e., it is a set of monic polynomials). Take $f \in K\langle X\rangle$ and $w\in X^*$. We call $f$ is {\it trivial modulo} $(\mathfrak{S},w)$, denoted by \[f \equiv 0 \bmod(\mathfrak{S},w),\]
if $f = \sum\limits_{\mathfrak{s}\in \mathfrak{S}} \kappa a \mathfrak{s} b$, where $\kappa \in K$, $a,b \in X^*$, and $a \overline{\mathfrak{s}}b \leqslant w$.

A monic set $\mathfrak{S}\subseteq K\langle X \rangle$ is called a \emph{Gr\"obner--Shirshov basis} in $K\langle X \rangle$ with respect to the monomial ordering $\leq$ if every composition of polynomials in $\mathfrak{S}$ is trivial modulo $\mathfrak{S}$ and the corresponding $w$.

The following Composition--Diamond lemma was first proved by Shirshov \cite{Sh2} (see also the survey \cite{BokSurv}) for free Lie algebras over fields (with deg-lex ordering). For commutative algebras, this lemma is known as Buchberger's theorem \cite{Buchberger}.

\begin{thm}\label{CDA}
Let $K$ be an arbitrary commutative ring with unit, $\leqslant$ a monomial ordering on $X^*$
and let $I(\mathfrak{S})$ be the ideal of $K \langle X \rangle$ generated by the monic
set $\mathfrak{S}\subseteq K \langle X \rangle$. Then the following statements are equivalent:
\begin{itemize}
\item[(1)] $\mathfrak{S}$ is a Gr\"obner--Shirshov basis in $K \langle X \rangle$;
\item[(2)] the set of irreducible words
   \[\mathrm{Irr}(\mathfrak{S}):=\left\{u \in X^*: u \ne a \overline{\mathfrak{s}}b,\,\m{s} \in \mathfrak{S},\,a,b\in X^*\right\}\]
is a linear basis of the algebra $K \langle \bigl.X\bigr|\mathfrak{S}\rangle:=K \langle X\rangle/I(\mathfrak{S})$.
\end{itemize}
\end{thm}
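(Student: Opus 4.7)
The plan is to follow the classical strategy of the Composition--Diamond lemma, checking that every step carries over from a field to a commutative ring by exploiting the fact that elements of $\mathfrak{S}$ are required to be monic, so no scalar needs to be inverted.

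First I would handle the spanning half of $(1)\Rightarrow(2)$. Given $0\neq f=\kappa\overline{f}+r_f$ with $\overline{f}=a\overline{\mathfrak{s}}b$ for some $\mathfrak{s}\in\mathfrak{S}$, the one-step reduction $f\mapsto f-\kappa\, a\mathfrak{s}b$ is a legitimate operation over $K$ because $\mathfrak{s}$ is monic, and by the monomial property it strictly decreases $\overline{f}$. Iterating terminates by the well-order on $X^*$, leaving a $K$-linear combination of elements of $\mathrm{Irr}(\mathfrak{S})$ congruent to $f$ modulo $I(\mathfrak{S})$, so $\mathrm{Irr}(\mathfrak{S})$ spans $K\langle X\mid\mathfrak{S}\rangle$.

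The crux is the following \emph{key lemma}: if $\mathfrak{S}$ is a Gr\"obner--Shirshov basis and $0\neq f\in I(\mathfrak{S})$, then $f\equiv 0\bmod (\mathfrak{S},\overline{f})$. Write $f=\sum_i\mu_i a_i\mathfrak{s}_ib_i$ and choose the representation that minimises $w=\max_i a_i\overline{\mathfrak{s}}_i b_i$ and then the number of indices attaining this $w$. If $w=\overline{f}$ the statement is immediate. Otherwise $w>\overline{f}$, so the leading coefficients at $w$ telescope to zero, and one must pair two maximal terms; since $\mathfrak{S}$ is monic the cancellation of the literal word $w$ is unambiguous. The relative position of $\overline{\mathfrak{s}}_{i_1}$ and $\overline{\mathfrak{s}}_{i_2}$ inside $w$ falls into three standard cases: disjoint (handled by direct rewriting, using that both $\mathfrak{s}_j-\overline{\mathfrak{s}}_j$ have smaller leading words), proper intersection (use that the intersection composition $(\mathfrak{s}_{i_1},\mathfrak{s}_{i_2})_{w'}$ is trivial modulo $(\mathfrak{S},w')$ with $w'\leq w$), and inclusion (use the inclusion composition analogously). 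In each case the pair can be rewritten so that either $w$ drops or the count of top-level terms decreases, contradicting minimality. This case analysis, where the field hypothesis is classically invoked for rescaling, is the main obstacle; its resolution rests entirely on the monic assumption plus the triviality of compositions.

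Granted the key lemma, $(1)\Rightarrow(2)$ is finished as follows: if $f=\sum_{u\in\mathrm{Irr}(\mathfrak{S})}\kappa_u u$ lies in $I(\mathfrak{S})$ and is nonzero, then $\overline{f}$ is the maximal irreducible $u$ with $\kappa_u\neq 0$, yet the lemma produces $f=\sum \kappa' a'\mathfrak{s}'b'$ with every $a'\overline{\mathfrak{s}}'b'\leq\overline{f}$, so $\overline{f}=a'\overline{\mathfrak{s}}'b'$ for some term and is therefore reducible, a contradiction. Hence all $\kappa_u=0$.

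Finally for $(2)\Rightarrow(1)$, fix a composition $h=(\varphi,\psi)_w$. By construction $h\in I(\mathfrak{S})$ and $\overline{h}\leq w$. Apply the reduction algorithm from the spanning argument to $h$: at every step the leading word of the current remainder is $\leq\overline{h}\leq w$, so every elementary piece $\kappa a\mathfrak{s}b$ subtracted satisfies $a\overline{\mathfrak{s}}b\leq w$. The procedure terminates in a $K$-linear combination of irreducible words that still lies in $I(\mathfrak{S})$; by (2) this combination must vanish. Reading the reduction history backwards expresses $h$ as a bounded-by-$w$ combination of elements $a\mathfrak{s}b$, i.e.\ $h\equiv 0\bmod(\mathfrak{S},w)$, which is exactly the Gr\"obner--Shirshov condition.
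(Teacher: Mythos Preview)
Your proposal is correct and follows essentially the same classical Composition--Diamond strategy as the paper. The only difference is one of explicitness: the paper's proof is a brief sketch that defers the crucial step ``$f\in I(\mathfrak{S})\Rightarrow\overline{f}=a\overline{\mathfrak{s}}b$'' to \cite[Theorem~1]{BokSurv} and merely asserts that the field argument goes through, whereas you actually spell out the key lemma, carry out the three-case (disjoint/intersection/inclusion) minimality argument, and pinpoint exactly where monicity replaces invertibility of leading coefficients---so your write-up is more self-contained, but the underlying route is the same.
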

\begin{proof}
The formal proof of the Composition--Diamond lemma for free associative algebras over field is found in \cite[Theorem 1]{BokSurv}. We essentially follow the ideas in this proof, and just for the reader's convenience, we briefly sketch it here.

\par $(1) \Longrightarrow (2).$ Let $\m{S}$ be a set of monic polynomials and let us assume that $\m{S}$ is a Gr\"obner--Shirshov basis. Using the same argument given in \cite[Theorem 1]{BokSurv}, we get that if $f\in I(\m{S})$, then $\overline{f} = a \overline{\m{s}}b$, where $a,b\in X^*$, $\m{s} \in \m{S}$. Using this observation, we obviously get for each $f\in K\langle X\rangle$,
\[
 I(\m{S}) \ni f \Longleftrightarrow f^{(1)}:=f -a{\m{s}}b \in I(\m{S}),
\]
for some $a, b\in X^*$ and $\m{s}\in \m{S}$. Continuing this line of reasoning, we see that
\[
 I(\m{S}) \ni f \Longleftrightarrow f^{(i)}: = f^{(i-1)}-\kappa^{(i-1)}a'\m{s}'b' \in I(\m{S}),
\]
for some $a', b'\in X^*$, $\m{s'}\in \m{S}$ and $\kappa^{(i-1)}\in K$ (note that $\kappa^{(i-1)}$ is the coefficient of the leading term of the polynomial $f^{(i-1)}$). We set $f^{(0)}:=f$. By using the process at finitely many times, we immediately obtain that
\[I(\m{S}) \ni f \Longleftrightarrow f^{(\ell_f)} = 0 \text{ for some positive integer } \ell_f.\]

Since $\m{S}$ is a Gr\"obner--Shirshov basis, then for any $\m{s}\in \m{S}$ there exists a positive integer $\ell_{\m{s}} \in \mathbb{N}$ such that $\m{s}^{(\ell_{\m{s}})}=0$.  From these notes, we immediately get that if $f \notin I(\m{S})$, then we may find a positive integer $t_f$ such that \[
f^{(t_f)} = \sum\kappa \widehat{f},
\]
where $0\neq\kappa \in K$, and $\widehat{f} \ne a\overline{\m{s}}b$ for any $a,b \in X^*$ and any $\m{s} \in \m{S}$, showing (2).

\par $(2) \Longrightarrow (1).$ Let $\m{S}\subseteq K\langle X\rangle$ be a set of monic polynomials, and let $f \in K\langle X\rangle$ be a polynomial. Using the same argument given in \cite[Lemma 2]{BokSurv}, we have
\[
 f = \sum\limits_{u \leqslant \overline{f}}\kappa u + \sum\limits_{a\overline{\m{s}}b \leqslant \overline{f}}\kappa'a\m{s}b,
\]
where $\kappa,\kappa' \in K$, and $u\in\mathrm{Irr}(\m{S})$. From this observation and $\mathrm{Irr}(\m{S})$ is a linear basis for $K\langle X \rangle /I(\m{S})$, we get that $(\m{s},\m{s}')_w \equiv 0 \bmod(\mathfrak{S},w)$ for any $\m{s},\m{s}'\in\m{S}$, and hence, $\m{S}$ is a Gr\"obner--Shirshov basis, finishing the proof.
\end{proof}

\smallskip

\begin{exas}
Let $K$ be an arbitrary commutative ring and consider the following algebra $\Lambda = K \langle x,y \rangle/(x^2 - y^2)$. Let us consider the polynomials $\varphi = x^2 - y^2$, $\psi = xy^2 - y^2x$, and let $y \leqslant x$. It is not hard to see that the set $\m{S} = \{\varphi,\psi\}$ is a Gr\"obner--Shirshov basis of $\Lambda$. Indeed,
\begin{align*}
  & (\varphi, \varphi)_{w}={\varphi}x - x {\varphi} = x^3-y^2x - (x^3 - xy^2) = \psi, & w =xxx\\
  & (\varphi,\psi)_{w} = \varphi y^2 - x\psi =x^2y^2 - y^2y^2 - (x^2y^2 - xy^2x) & w = x^2y^2,\\
  & \phantom{(\varphi,\psi)_{w}}= \psi x + y^2 \varphi. &
\end{align*}

Since the set $\m{S}$ is monic, then the set
\[
 \mathrm{Irr}(\m{S}) = \bigcup\limits_{n,m > 0}\Bigl\{1,x, xy, y^n,y^mx\Bigr\}
   \]
is the $K$-basis for $\Lambda$, by Theorem \ref{CDA}.\hfill$\square$
\end{exas}

\section{A basis of Leavitt path algebras with coefficients in commutative rings}
The main goal of this section is to give a basis of the Leavitt path algebra of a row-finite graph with coefficients in
a commutative ring, which is an analog of Zelmanov et. al.'s result \cite[Theorem 1]{aajz:lpaofgkd}.

\smallskip

We begin by recalling some general notions of graph theory: a (directed) graph
$E = \left(E^0, E^1, s, r\right)$ (or shortly $E = \left(E^0, E^1\right)$)
consists of two disjoint sets $E^0$ and $E^1$, called \emph{vertices} and \emph{edges}
respectively, together with two maps $s, r: E^1 \longrightarrow E^0$.  The
vertices $s(e)$ and $r(e)$ are referred to as the \emph{source} and the \emph{range}
of the edge~$e$, respectively. The graph is called \emph{row-finite} if
$|s^{-1}(v)|< \infty$ for all $v\in E^0$. All graphs in this paper will be assumed
to be row-finite. A graph $E$ is \emph{finite} if both sets $E^0$ and $E^1$ are finite
(or equivalently, when $E^0$ is finite, by the row-finite hypothesis).
A vertex~$v$ for which $s^{-1}(v)$ is empty is called a \emph{sink}; a vertex~$v$ for which
$r^{-1}(v)$ is empty is called a \emph{source}; and a vertex~$v$ is \emph{regular} iff
$0 < |s^{-1}(v)| < \infty$. A \emph{path} $p = e_{1} \dots e_{n}$ in a graph $E$ is a sequence of
edges $e_{1}, \dots, e_{n}$ such that $r(e_{i}) = s(e_{i+1})$ for $i
= 1, \dots, n-1$.  In this case, we say that the path~$p$ starts at
the vertex $s(p) := s(e_{1})$ and ends at the vertex $r(p) :=
r(e_{n})$, and has \emph{length} $|p| := n$. 
We consider the vertices in~$E^0$ to be paths of length~$0$.
If $p$ is a path of positive length in $E$, and if $v= s(p) = r(p)$, then~$p$ is a \emph{closed path
based at} $v$. A closed path based at~$v$, $p = e_{1} \dots e_{n}$, is
a \emph{closed simple path based at}~$v$ if $s(e_i) \neq v$ for
every $i > 1$. If $p = e_{1} \dots e_{n}$ is a closed path and all vertices
$s(e_{1}), \dots, s(e_{n})$ are distinct, then the subgraph
$(s(e_{1}), \dots, s(e_{n}); e_{1}, \dots, e_{n})$ of the graph $E$
is called a \emph{cycle}.  An edge~$f$ is an \emph{exit} for a path
$p = e_{1} \dots e_{n}$ if $s(f) = s(e_{i})$ but $f \ne e_{i}$ for
some $1 \le i \le n$. A graph $E$ is \emph{acyclic} if it has no cycles;
and the graph $E$ is said to be a \emph{no-exit graph} if no cycle in $E$ has an exit.
We say that a graph $E$ satisfies \emph{Condition} (L) if every cycle in $E$ has an exit.

\smallskip

\par Given a (row-finite) directed graph $E$ and a field $K$, Abrams and Aranda Pino in
\cite{ap:tlpaoag05}, and independently Ara, Moreno, and Pardo in \cite{amp:nktfga},
introduced the \emph{Leavitt path algebra} $L_K(E)$. These Leavitt path algebras generalize
the Leavitt algebras $L_K(1, n)$ of \cite{leav:tmtoar}, and also contain many other interesting
classes of algebras. In addition, Leavitt path algebras are intimately related to graph
$C^*$-algebras (see \cite{r:ga}). Later, in \cite{tomf:lpawciacr} Tomforde generalized the
construction of Leavitt path algebras by replacing the field with a commutative ring.

\begin{defn}[{\textit{cf.}~\cite[Definition~1.3]{ap:tlpaoag05} and \cite[Definition~2.4]{tomf:lpawciacr}}]
Let $K$ be a commutative ring and $E = (E^0, E^1,s,r)$ be a graph. The \emph{Leavitt path algebra} $L_K(E)$ of the graph~$E$ \emph{with coefficients in}~$K$ is the $K$-algebra presented by the
set of generators $E^0\cup E^1\cup (E^1)^{\ast}$ -- where $E^1\rightarrow (E^1)^{\ast}$,
$e\mapsto e^{\ast}$, is a bijection with $E^0$, $E^1$, $(E^1)^{\ast}$ pairwise disjoint -- satisfying the following relations:

(1) $v v' = \delta_{v,v'} v$ for all $v, v' \in E^0$;

(2) $s(e) e = e = e r(e)$, $r(e) e^{\ast} = e^{\ast} = e^{\ast} s(e)$ for all $e\in E^1$;

(3) $e^{\ast} f = \delta_{e,f} r(e)$ for all $e,f \in E^1$;

(4) $v = \sum\limits_{s(e)=v} e e^{\ast}$ whenever $v\in E^0$ is a regular vertex.
\end{defn}

Let us present a list of examples of some known algebras which can be described as a Leavitt path algebra. For more examples see \cite{Abrams-surv}.

\begin{exas}[{\bf Full matrix algebras}] Let $A_n$ denote the graph which is shown in fig.\ref{An}
\begin{figure}[h!]
  \[
   \xymatrix{
    \bullet^{v_1} \ar@{->}[r]^{e_1} & \bullet^{v_2} \ar@{->}[r]^{e_2} & \ldots & \bullet^{v_{n-1}} \ar@{->}[r]^{e_{n-1}} & \bullet^{v_n}
    }
  \]
  \caption{The graph $A_n$ is shown}\label{An}
\end{figure}
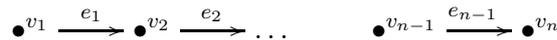

\smallskip

\par Then $L_K(A_n) \cong \mathbb{M}_n(K)$, the full $n\times n$ matrix algebra over $K$. Let us describe this isomorphism. Let $E_{i,j}$ be the standard matrix units, here $1 \le i,j\le n$, then the isomorphism $L_K(A_n) \cong \mathbb{M}_n(K)$ can be described as follows:
\begin{align*}
  &v_i \leftrightarrow E_{i,i}, & 1\le i\le n,\\
  &e_i \leftrightarrow E_{i,i+1}, &1 \le i \le n-1,\\
  &e_i^* \leftrightarrow E_{i+1,i}, & 1 \le i \le n-1.
\end{align*}\hfill$\square$
\end{exas}

\begin{exas}[{\bf The Laurent polynomial algebra}]\label{Loran}
Let $\Omega_1$ denote the graph
\[
   \xymatrix{
    \bullet^v \ar@(ur,dr)[]^e
    }
  \]

Then $L_K(\Omega) \cong K[t,t^{-1}]$, the Laurent polynomial algebra. The isomorphism is clear:
\begin{align*}
  &v \leftrightarrow 1,\\
  &e \leftrightarrow t,\\
  &e^* \leftrightarrow t^{-1}.
\end{align*}\hfill$\square$
\end{exas}

\begin{exas}[{\bf Leavitt algebra}]
For $1 \le \ell < \infty$, let $\Omega_\ell$ denote the graph with $\ell$ edges and with one vertex $v$ (see fig.\ref{O}).
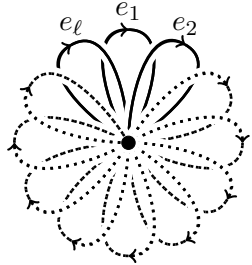
\begin{figure}[h!]
  \begin{center}
    \begin{tikzpicture}
       \begin{scope}
        \draw [line width = 5,white] (0,0) to [out = 120, in = 180] (0,1.5) to [out = 0, in =60] (0,0);
        \draw[line width =1] (0,0) to [out = 120, in = 180] (0,1.5) to [out = 0, in =60] (0,0);
        \draw[line width =1,->] (0,0) to [out = 120, in = 180] (0,1.5);
        \draw[line width =1] (0,1.5) to [out = 0, in =60] (0,0);
        \node [above] at (0,1.5) {$e_1$};
       \end{scope}
       \begin{scope}[rotate =30]
        \draw [line width = 5,white] (0,0) to [out = 120, in = 180] (0,1.5) to [out = 0, in =60] (0,0);
        \draw[line width =1] (0,0) to [out = 120, in = 180] (0,1.5) to [out = 0, in =60] (0,0);
        \draw[line width =1,->] (0,0) to [out = 120, in = 180] (0,1.5);
        \draw[line width =1] (0,1.5) to [out = 0, in =60] (0,0);
        \node [above] at (0,1.5) {$e_\ell$};
       \end{scope}
       \begin{scope}[rotate =330]
        \draw [line width = 5,white] (0,0) to [out = 120, in = 180] (0,1.5) to [out = 0, in =60] (0,0);
        \draw[line width =1] (0,0) to [out = 120, in = 180] (0,1.5) to [out = 0, in =60] (0,0);
        \draw[line width =1,->] (0,0) to [out = 120, in = 180] (0,1.5);
        \draw[line width =1] (0,1.5) to [out = 0, in =60] (0,0);
        \node [above] at (0,1.5) {$e_2$};
       \end{scope}
       \begin{scope}[rotate =300]
        \draw [line width = 5,white] (0,0) to [out = 120, in = 180] (0,1.5) to [out = 0, in =60] (0,0);
        \draw[line width =1,dotted] (0,0) to [out = 120, in = 180] (0,1.5) to [out = 0, in =60] (0,0);
        \draw[line width =1,->,dotted] (0,0) to [out = 120, in = 180] (0,1.5);
        \draw[line width =1,dotted] (0,1.5) to [out = 0, in =60] (0,0);
       \end{scope}
       \begin{scope}[rotate =270]
        \draw [line width = 5,white] (0,0) to [out = 120, in = 180] (0,1.5) to [out = 0, in =60] (0,0);
        \draw[line width =1,dotted] (0,0) to [out = 120, in = 180] (0,1.5) to [out = 0, in =60] (0,0);
        \draw[line width =1,->,dotted] (0,0) to [out = 120, in = 180] (0,1.5);
        \draw[line width =1,dotted] (0,1.5) to [out = 0, in =60] (0,0);
       \end{scope}
       \begin{scope}[rotate =240]
        \draw [line width = 5,white] (0,0) to [out = 120, in = 180] (0,1.5) to [out = 0, in =60] (0,0);
        \draw[line width =1,dotted] (0,0) to [out = 120, in = 180] (0,1.5) to [out = 0, in =60] (0,0);
        \draw[line width =1,->,dotted] (0,0) to [out = 120, in = 180] (0,1.5);
        \draw[line width =1,dotted] (0,1.5) to [out = 0, in =60] (0,0);
       \end{scope}
       \begin{scope}[rotate =210]
        \draw [line width = 5,white] (0,0) to [out = 120, in = 180] (0,1.5) to [out = 0, in =60] (0,0);
        \draw[line width =1,dotted] (0,0) to [out = 120, in = 180] (0,1.5) to [out = 0, in =60] (0,0);
        \draw[line width =1,->,dotted] (0,0) to [out = 120, in = 180] (0,1.5);
        \draw[line width =1,dotted] (0,1.5) to [out = 0, in =60] (0,0);
       \end{scope}
       \begin{scope}[rotate =180]
        \draw [line width = 5,white] (0,0) to [out = 120, in = 180] (0,1.5) to [out = 0, in =60] (0,0);
        \draw[line width =1,dotted] (0,0) to [out = 120, in = 180] (0,1.5) to [out = 0, in =60] (0,0);
        \draw[line width =1,->,dotted] (0,0) to [out = 120, in = 180] (0,1.5);
        \draw[line width =1,dotted] (0,1.5) to [out = 0, in =60] (0,0);
       \end{scope}
       \begin{scope}[rotate =150]
        \draw [line width = 5,white] (0,0) to [out = 120, in = 180] (0,1.5) to [out = 0, in =60] (0,0);
        \draw[line width =1,dotted] (0,0) to [out = 120, in = 180] (0,1.5) to [out = 0, in =60] (0,0);
        \draw[line width =1,->,dotted] (0,0) to [out = 120, in = 180] (0,1.5);
        \draw[line width =1,dotted] (0,1.5) to [out = 0, in =60] (0,0);
       \end{scope}
       \begin{scope}[rotate =120]
        \draw [line width = 5,white] (0,0) to [out = 120, in = 180] (0,1.5) to [out = 0, in =60] (0,0);
        \draw[line width =1,dotted] (0,0) to [out = 120, in = 180] (0,1.5) to [out = 0, in =60] (0,0);
        \draw[line width =1,->,dotted] (0,0) to [out = 120, in = 180] (0,1.5);
        \draw[line width =1,dotted] (0,1.5) to [out = 0, in =60] (0,0);
       \end{scope}
       \begin{scope}[rotate =90]
        \draw [line width = 5,white] (0,0) to [out = 120, in = 180] (0,1.5) to [out = 0, in =60] (0,0);
        \draw[line width =1,dotted] (0,0) to [out = 120, in = 180] (0,1.5) to [out = 0, in =60] (0,0);
        \draw[line width =1,->,dotted] (0,0) to [out = 120, in = 180] (0,1.5);
        \draw[line width =1,dotted] (0,1.5) to [out = 0, in =60] (0,0);
       \end{scope}
       \begin{scope}[rotate =60]
        \draw [line width = 5,white] (0,0) to [out = 120, in = 180] (0,1.5) to [out = 0, in =60] (0,0);
        \draw[line width =1,dotted] (0,0) to [out = 120, in = 180] (0,1.5) to [out = 0, in =60] (0,0);
        \draw[line width =1,->,dotted] (0,0) to [out = 120, in = 180] (0,1.5);
        \draw[line width =1,dotted] (0,1.5) to [out = 0, in =60] (0,0);
       \end{scope}
       \draw [fill](0,0) circle (2.5pt);
    \end{tikzpicture}
  \end{center}
  \caption{The graph  $\Omega_\ell$ is shown.}\label{O}
\end{figure}

Then $L_K(\Omega_\ell) \cong L_K(1,\ell)$, the Leavitt algebra of order $\ell$, which is an $K$-algebra, defined by the generators $\{x_i,y_i:1 \le i \le \ell\}$, and relations
\[
y_ix_i = \delta_{i,j}, \qquad \sum\limits_{i=1}^\ell x_iy_i =1.
\]\hfill$\square$
\end{exas}

\begin{exas}[{\bf The Toeplitz algebra}]\label{Toeplitz}
For any field $K$, the Jacobson algebra, described in \cite{J}, is the $K$-algebra:
\[
A = K \langle x,y: xy =1 \rangle.
\]
This algebra was the first example appearing in the literature of an algebra which is not directly finite, that is, in which there are elements $x,y$ for which $xy =1$ but $yx \ne 1$. Let $\mathcal{T}$ denote the ``Toeplitz graph'', which is shown on the fig.\ref{T}
\begin{figure}[h!]
  \[
  \xymatrix{
    \bullet^v \ar@(ul,dl)_e \ar@{->}[r]^f& \bullet^u
   }
   \]
  \caption{The Toeplitz graph $\mathcal{T}$ is shown.}\label{T}
\end{figure}
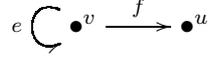

Then $L_K(\mathcal{T}) \cong A$. The isomorphism is described as follows:
\begin{align*}
  &v \leftrightarrow yx,\\
  &u \leftrightarrow 1-yx,\\
  &e \leftrightarrow y^2x,\\
  &f \leftrightarrow y-y^2x,\\
  &e^* \leftrightarrow yx^2,\\
  &f^*\leftrightarrow x - yx^2.
\end{align*}\hfill$\square$
\end{exas}

Notice that if the graph $E$ is finite, then $L_K(E)$ is a unital ring having identity
$1 = \sum\limits_{v\in E^0}v$ (see, e.g., \cite[Lemma 1.6]{ap:tlpaoag05} and \cite[Subsection 4.2]{tomf:lpawciacr}). It is easy to see that the mappings given by $v \mapsto v$, for $v \in E^0$, and $e \longmapsto e^{\ast}$, $e^{\ast} \longmapsto e$ for $e\in E^1$, produce an involution on the algebra $L_K(E)$, and for any path $p = e_{1} \cdots e_{n} $ there exists $p^{\ast} := e_{n}^{\ast} \cdots e_{1}^{\ast}$. Also, $L_K(E)$ has the following \textit{universal} property: if $A$ is an $K$-algebra generated by a family of elements $\left\{a_{v}, b_{e}, c_{e^{\ast}} \mid v\in E^0, e\in E^1, {e^{\ast } \in (E^1)^{\ast}}\right\}$ satisfying the analogous to (1) -- (4) relations in Definition~2.1, then there always exists a unique $K$-algebra homomorphism
$\varphi: L_K(E) \rightarrow A$ given by ${\varphi(v) = a_{v}}$,
${\varphi(e) = b_{e}}$ and ${\varphi(e^{\ast}) = c_{e^{\ast}}}$.

\smallskip

\par In \cite[Theorem 1]{aajz:lpaofgkd} Zelmanov et. al. found a basis of the Leavitt path algebra of a finite graph with coefficients in a field. Here we note that the result extends to the case of Leavitt path algebras with coefficients in a commutative ring.

Namely, for an arbitrary vertex $v\in E^0$ which is not a sink, choose an edge $e\in E^1$ such that $s(e) = v$. We will refer to this edge as \emph{special}. In other words, we fix a function $\gamma: E^0\setminus \{sinks\} \longrightarrow E^1$ such that $s(e) = v$ for an arbitrary vertex $v\in E^0\setminus \{sinks\}$.

\smallskip

\par As in \cite[Theorem 1]{aajz:lpaofgkd} we consider the the order $<$ on the set of generators $X:=E^0 \cup E^1 \cup \left(E^1\right)^*$. Chose an arbitrary well-ordering on the set of vertices $E^0$. If $e$, $f$ are edges and $s(e) < s(f)$, then $e < f$. It remains to order the edges that have the same source. Let $v$ be a vertex which is not a sink. Let $e_1, \ldots, e_\ell$ be all the edges that originate from $v$. Suppose $e_1$ is a special edge. We order the edges as follows: $e_1 > e_2 >\ldots > e_\ell$. Choose an arbitrary well-ordering on the set $E^1$. For arbitrary elements $v\in E^0$, $e \in E^1$, $f^* \in \left(E^1\right)^*$, we let $v<e<f^*$. Thus the set $E^0 \cup E^1 \cup \left(E^1\right)^*$ is well-ordered. Let $X^*$ be the set of all words in the alphabet $X$. The length-lex order makes $X^*$ a well-ordered set. With respect to this order, we have the follow useful fact.

\begin{lem}\label{GSB-LPA}
Let $E$ be a row-finite graph and $K$ a commutative ring. Then, the Gr\"obner--Shirshov basis $\m{S}$ of the Leavitt path algebra $L_K(E)$ can be described as follows
\[
\m{S} = \m{S}_1 \cup \m{S}_2 \cup \m{S}_3\cup\m{S}_4,
\]
where
\begin{align*}
  &\m{S}_1 = \bigcup\limits_{v,u\in E^0,\,e\in E^1}\left\{vu - \delta_{v,u}v,\, ve - \delta_{v, s(e)}e,\,ev - \delta_{v,r(e)}e \right\},\\
  &\m{S}_2 = \bigcup\limits_{v\in E^0,\,e,f \in E^1}\left\{ve^* - \delta_{v, r(e)}e^*,\,e^*v - \delta_{v,s(e)}e^*,\, e^*f - \delta_{e,f}r(e) \right\},\\
  &\m{S}_3 = \bigcup\limits_{e,f\in E^1}\left\{ee^* -s(e) + \sum\limits_{s(f)=s(e)} ff^*: \gamma(s(e))\ne f\right\},\\
  &\m{S}_4 = \bigcup\limits_{e,f\in E^1}\Bigl(\left\{ef:r(e) \ne s(f)\right\}\cup \left\{e^*f^*: r(f) \ne s(e)\right\}\cup \left\{ef^*: r(e) \ne r(f)\right\}\Bigr).
\end{align*}
\end{lem}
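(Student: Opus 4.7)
The plan is to apply Theorem \ref{CDA}: I must show that (a) the two-sided ideal $I(\m{S})\subseteq K\langle X\rangle$ equals the defining ideal of $L_K(E)$, and (b) every composition of elements of $\m{S}$ is trivial modulo $(\m{S},w)$ under the length-lex order fixed in the paragraph preceding the lemma. Since that order was arranged so that $E^0 < E^1 < (E^1)^*$ and the special edge is maximal among edges with a common source, the leading terms are easy to read off: each element of $\m{S}_1$, $\m{S}_2$, $\m{S}_4$ has as leading term the length-two monomial on its left-hand side ($vu$, $ve$, $ev$, $ve^*$, $e^*v$, $e^*f$, $ef$, $ef^*$, $e^*f^*$), and the element of $\m{S}_3$ associated to a regular vertex $v$ has leading term $\gamma(v)\gamma(v)^*$ because $\gamma(v) > f$ for every other edge $f$ with $s(f) = v$.

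For (a), the polynomials in $\m{S}_1$ and $\m{S}_2$ are exactly (or mild rewrites of) relations~(1)--(3) of Definition~2.1, the elements of $\m{S}_4$ are consequences of~(2)--(3), and each element of $\m{S}_3$ is the CK2-relation~(4) solved for $\gamma(v)\gamma(v)^*$ at a regular vertex $v$. Conversely, (4) is recovered by combining that $\m{S}_3$-element with the $\m{S}_1$-identity $v\gamma(v) = \gamma(v)$, so $I(\m{S})$ is exactly the defining ideal of $L_K(E)$.

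For (b), I would enumerate the compositions of pairs of elements of $\m{S}$, indexed by the pair $(\m{S}_i,\m{S}_j)$ they come from. Because most leading terms have length two, overlaps have length one, and the compositions internal to $\m{S}_1$, $\m{S}_2$, $\m{S}_4$ all reduce to zero in one or two rewriting steps using the Kronecker-delta identities already in $\m{S}$. This is essentially Zelmanov et al.'s argument from \cite[Theorem 1]{aajz:lpaofgkd} in the field case, and it transfers verbatim because each rewriting step substitutes a monic element of $\m{S}$ and Theorem \ref{CDA} is stated for arbitrary commutative $K$.

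The main obstacle will be the compositions involving $\m{S}_3$, whose tail is a sum of the vertex $s(e)$ with several $ff^*$ terms rather than a single generator. The critical families are the intersection compositions of an $\m{S}_3$-element with an $\m{S}_2$-element at the rightmost letter $\gamma(v)^*$ (at words such as $\gamma(v)\gamma(v)^* f$ or $\gamma(v)\gamma(v)^* u$), together with the compositions of an $\m{S}_1$-element with an $\m{S}_3$-element at the leftmost letter $\gamma(v)$. One step of rewriting by the $\m{S}_3$-relation produces a sum of terms whose leading words are of the form $vff^*$, $ff^*f'$, or $uff^*$; these are then cleared by further reduction using $\m{S}_1$, $\m{S}_2$, $\m{S}_4$ and land strictly below $w$ in the length-lex order. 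Once this bookkeeping is carried out systematically for every pair $(\m{S}_i,\m{S}_j)$, Theorem \ref{CDA} yields the lemma.
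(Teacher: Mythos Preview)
Your approach is correct and matches the paper's: the paper likewise singles out the intersection composition between an $\m{S}_3$-element and the $\m{S}_2$-relation $e^*g-\delta_{e,g}r(e)$ at the word $ee^*g$ as the only one worth computing explicitly, and dismisses all remaining compositions as straightforward. If anything, your outline (checking that $I(\m{S})$ coincides with the defining ideal, identifying the leading term $\gamma(v)\gamma(v)^*$, and planning a systematic case enumeration) is more detailed than what the paper actually records.
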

\begin{proof}
Let us consider the polynomials
  \[
  \varphi = e^*g - \delta_{e,g}r(e), \qquad \rho = ee^*-s(e) + \sum_{\substack{s(f) =s(e)\\ \gamma(s(e)) \ne f}}ff^*.
  \]
We have
\begin{eqnarray*}
  (\varphi,\rho)_{ee^*g}&=& \left(ee^* + \sum_{\substack{s(f) =s(e)\\ \gamma(s(e)) \ne f}}ff^* - s(e)\right)g - e \Bigl(e^*g - \delta_{e,g}r(e)\Bigr)=\\
  &=&\delta_{e,g}er(e) + \delta_{f,g}gr(g) - \delta_{s(e),s(g)}g - \delta_{e,g}er(e) + \delta_{e,g}er(e)=\\
  &=& \delta_{e,g}e + \delta_{f,g}g - \delta_{s(e),s(g)}g,
\end{eqnarray*}
it follows that
\[
 (\varphi,\rho)_{ee^*g} \equiv 0 \left(\m{S}, ee^*g\right).
\]
The straightforward computations show that the set of another polynomials is closed with respect to compositions.
\end{proof}

\begin{thm}[{\textit{cf.}~\cite[Theorem 1]{aajz:lpaofgkd}}]\label{B-LPA}
For any graph $E= \left(E^0, E^1, s, r\right)$ and any commutative ring $K$, the following elements form a basis of the Leavitt path algebra $L_K(E):$
\begin{itemize}
\item[(1)] the set of all vertices $E^0$;
\item[(2)] $p, p^{\ast},$ where $p$ is a path of positive length in $E$;
\item[(3)] $pq^{\ast},$ where $p = e_1\cdots e_n$, $q= f_1\cdots f_m$, $e_i, f_j\in E^1$, are paths of positive length that end at the same vertex $r(e_n) = r(f_m)$, with the condition that the last edges $e_n$ and $f_m$ are either distinct or equal, but not special.
\end{itemize}
\end{thm}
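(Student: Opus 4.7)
The plan is to combine Lemma \ref{GSB-LPA} with Theorem \ref{CDA}. Lemma \ref{GSB-LPA} exhibits $\mathfrak{S} = \mathfrak{S}_1 \cup \mathfrak{S}_2 \cup \mathfrak{S}_3 \cup \mathfrak{S}_4$ as a Gr\"obner--Shirshov basis for $L_K(E)$ inside $K\langle X\rangle$, where $X = E^0 \cup E^1 \cup (E^1)^*$. Theorem \ref{CDA} then automatically upgrades this to the statement that the set $\mathrm{Irr}(\mathfrak{S})$ of irreducible words is a $K$-linear basis of $L_K(E)$. The whole task therefore reduces to identifying $\mathrm{Irr}(\mathfrak{S})$ with the union of the three families (1)--(3).

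To carry this out I would first read off the leading monomials under the length-lex ordering constructed before Lemma \ref{GSB-LPA}. From $\mathfrak{S}_1 \cup \mathfrak{S}_2$ these are the two-letter words $vu$, $ve$, $ev$, $ve^*$, $e^*v$, $e^*f$ (for any $v,u\in E^0$ and $e,f\in E^1$); from $\mathfrak{S}_4$ they are $ef$, $e^*f^*$, $ef^*$ under the indicated non-composability conditions; and from $\mathfrak{S}_3$, since the special edge at a vertex is maximal among edges sharing that source, the leading term of the polynomial $\rho$ associated to an edge $e=\gamma(s(e))$ is $ee^*$. A word $w\in X^*$ is irreducible iff it contains none of these subwords.

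Next I would enumerate all such $w$ case by case. If $w$ contains any vertex, the prohibitions rule out any neighbouring letter, so $w$ must be a single vertex and yields family (1). Otherwise $w$ is a word over $E^1 \cup (E^1)^*$; the ban on $e^*f$ forces every unstarred letter to precede every starred one, so $w = e_1\cdots e_n f_m^* \cdots f_1^*$ with $n+m\geq 1$. The $\mathfrak{S}_4$-bans then force $e_1\cdots e_n$ and $f_1\cdots f_m$ to be paths in $E$ and, when both $n$ and $m$ are positive, force $r(e_n)=r(f_m)$. The cases $m=0$ or $n=0$ yield family (2), while $n,m\geq 1$ yields family (3).

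The only delicate point --- which I expect to be the main obstacle --- is matching the single remaining $\mathfrak{S}_3$-prohibition, namely ``$ee^*$ with $e=\gamma(s(e))$ is forbidden'', to the precise formulation in family (3). Inside the $pq^*$ analysis this prohibition rules out exactly the case $e_n=f_m$ with $e_n$ special, which is precisely the clause in (3) that ``the last edges $e_n$ and $f_m$ are either distinct or equal, but not special''. Verifying this correspondence, and in particular confirming that the chosen well-ordering makes the special edge the leading letter in each $\mathfrak{S}_3$-polynomial, is the only step beyond routine bookkeeping.
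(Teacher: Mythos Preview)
Your proposal is correct and follows exactly the paper's approach: invoke Lemma~\ref{GSB-LPA} and Theorem~\ref{CDA} to conclude that $\mathrm{Irr}(\mathfrak{S})$ is a $K$-basis, then identify $\mathrm{Irr}(\mathfrak{S})$ with the listed families. In fact the paper's proof is a single sentence leaving that identification implicit, so your write-up is strictly more detailed than what appears there.
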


\begin{proof}
By Theorem \ref{CDA} and Lemma \ref{GSB-LPA}, the set of irreducible words (not containing
the leading words of relations in Lemma \ref{GSB-LPA} as subwords) is an $K$-basis of
$L_K(E)$, which finishes the proof.
\end{proof}

As an application of Theorem \ref{B-LPA}, we get the following useful facts.

\begin{cor}\label{cor3.4}
Let $E = \left(E^0, E^1\right)$ be a finite graph, $K$ a commutative ring and $K_0 := \prod\limits_{v\in E^0}K.$ Let $\left\{a_v\ |\ v\in E^0\right\}$ be the standard basis of the free $K$-module $K_0$. Then the following statements hold:
\begin{itemize}
\item[(1)] the ring homomorphism $\varphi: K_0\longrightarrow L_K(E)$, defined by $\varphi(a_v) = v$ for all $v\in E^0$, is injective;
\item[(2)] $\mathrm{Im}(\varphi)$ is a direct summand of $L_K(E)$ as an $K_0$-module;
\item[(3)] $L_K(E)$ is a projective $K_0$-module.
\end{itemize}
\end{cor}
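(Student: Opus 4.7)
My plan is to extract all three statements directly from the explicit $K$-basis of $L_K(E)$ supplied by Theorem~\ref{B-LPA}. The first step is a uniform bookkeeping observation: to each basis element $b$ I assign a unique ``effective source'' vertex $\sigma(b)\in E^0$ with the property that $u\cdot b = \delta_{u,\sigma(b)}\,b$ in $L_K(E)$ for every $u\in E^0$. Concretely, I set $\sigma(v)=v$, $\sigma(p)=s(p)$ for a path $p$ of positive length, $\sigma(p^*)=r(p)$, and $\sigma(pq^*)=s(p)$; each identity is immediate from relations~(1)--(2) in the definition of $L_K(E)$. This already yields~(1): the vertices $E^0$ form part of the basis of Theorem~\ref{B-LPA} and are therefore $K$-linearly independent in $L_K(E)$, so $\varphi\bigl(\sum_v k_v a_v\bigr)=\sum_v k_v v=0$ forces every $k_v=0$.

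Next I would upgrade the identity for $\sigma$ to a statement about the $K_0$-module structure induced on $L_K(E)$ by $\varphi$: for $\lambda=(\lambda_v)_v\in K_0$ and any basis element $b$ one has $\lambda\cdot b=\lambda_{\sigma(b)}\,b$. Consequently the $K$-span $Kb$ is a cyclic $K_0$-submodule, and the assignment $b\mapsto a_{\sigma(b)}$ extends to a $K_0$-module isomorphism between $Kb$ and the principal ideal $K_0 a_{\sigma(b)}\subseteq K_0$. Writing $\mathcal{B}$ for the basis of Theorem~\ref{B-LPA}, the decomposition
\[
L_K(E) \;=\; \bigoplus_{b\in\mathcal{B}} K\,b
\]
is therefore a decomposition of $K_0$-modules, and taking $M:=\bigoplus_{b\in\mathcal{B}\setminus E^0} Kb$ produces $L_K(E)=\operatorname{Im}(\varphi)\oplus M$, which proves~(2).

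For (3) I would invoke the orthogonal idempotent decomposition $K_0 = \bigoplus_{v\in E^0} K_0 a_v$ to display each $K_0 a_{\sigma(b)}$ as a direct summand of $K_0$, hence as a projective $K_0$-module; since every summand $Kb$ above is $K_0$-isomorphic to such a $K_0 a_{\sigma(b)}$, the direct sum $L_K(E)$ is itself projective over $K_0$. The only mildly technical point I anticipate is the very first step---verifying case by case that the assignment $\sigma$ correctly describes the left action of $E^0$ on every type of basis element in Theorem~\ref{B-LPA}---but once that is in place, (1)--(3) reduce to a bookkeeping rearrangement of the basis decomposition, so I do not anticipate a serious obstacle beyond this.
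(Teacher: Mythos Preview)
Your argument is correct and follows essentially the same route as the paper: both deduce (1)--(2) directly from the $K$-basis of Theorem~\ref{B-LPA}, and for (3) both observe that each basis element lies in a single vertex sector on which $K_0$ acts through one coordinate copy of $K$, whence $L_K(E)$ decomposes as a direct sum of copies of the projective $K_0$-module $K\cong K_0 a_v$. The only cosmetic difference is that the paper first groups the basis elements by their source vertex into the summands $vL_K(E)$ before invoking projectivity, whereas you work one basis element at a time; the content is identical.
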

\begin{proof}
The statements (1) and (2) follow immediately from Theorem \ref{B-LPA}.

(3) For any $v\in E^0$ we denote by $\pi_v$ the $v$-th canonical surjection from $K_0$ onto $K$. We then have that
\[
L_K(E) = \bigoplus_{v\in E^0}\mathrm{Im}(\pi_v)L_K(E)
\]
as $K_0$-modules. For any $v\in E^0$, by Theorem \ref{B-LPA}, $\mathrm{Im}(\pi_v)L_K(E)$ is a free $K$-module with a basis consisting the following elements:
\begin{itemize}
\item[(1)] $p$, where $p$ is a path in $E$ starting at $v$;
\item[(2)] $q^*$, where $q$ is a path of positive length in $E$ ending at $v$;
\item[(3)] $pq^{\ast},$ where $p = e_1\cdots e_n$, $q= f_1\cdots f_m$, $e_i, f_j\in E^1$, are paths of positive length that end at the same vertex $r(e_n) = r(f_m)$ and $s(p) = v$, with the condition that the last edges $e_n$ and $f_m$ are either distinct or equal, but not special.
\end{itemize}

Furthermore, $K$ is clearly a projective $K_0$-module by pullback along $\pi_v$, and hence, $\mathrm{Im}(\pi_v)L_K(E)$ is also a projective $K_0$-module. From these observations, we immediately get that $L_K(E)$ is a projective $K_0$-module, finishing the proof.
\end{proof}

The structure of the Leavitt path algebra of a finite no-exit graph with coefficients in a field is given in \cite[Theorems 3.8 and 3.10]{apm:lflpa} (see, also, \cite[Theorem 2.1]{al:nlpaatra}) by Abrams, Aranda Pino and Siles Molina. Using Theorem \ref{B-LPA} and repeating verbatim the proofs of \cite[Theorems 3.8 and 3.10]{apm:lflpa}, we note that the result extends to the case of Leavitt path algebras with coefficients in a commutative ring.

\begin{cor}\label{cor3.5} Let $K$ be a commutative ring, $E$ a finite no-exit graph,
$\{c_1,\ldots, c_l\}$ the set of cycles, and $\{v_1, \ldots, v_k\}$ the set of sinks.
Then $L_K(E)$ is isomorphic to
\[
\bigoplus_{i=1}^l \bigoplus_{j=1}^{k}\mathbb{M}_{m_i}\left(K[x,x^{-1}]\right)\oplus \mathbb{M}_{n_j}(K)
\]
where for each $1\leq i\leq l$, $m_i$ is the number of paths ending in a fixed (although arbitrary) vertex of the cycle $c_i$ which do not contain the cycle itself, and for each $1\leq j\leq k$, $n_j$ is the number of paths ending in the sink $v_j$.
\end{cor}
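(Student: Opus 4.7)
The plan is to reduce everything to bookkeeping on the explicit basis furnished by Theorem \ref{B-LPA}, following the strategy of \cite[Theorems 3.8 and 3.10]{apm:lflpa}. The key observation is that the arguments in loc.\ cit.\ only use the basis of monomials in the generators together with the Leavitt relations; neither input requires $K$ to be a field, so once Theorem \ref{B-LPA} is in hand the coefficient ring is irrelevant.

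First I would partition the basis from Theorem \ref{B-LPA} according to where paths terminate. Because $E$ is finite and no cycle has an exit, every path of positive length eventually either ends at one of the sinks $v_1,\dots,v_k$ or enters (and stays inside) one of the cycles $c_1,\dots,c_l$; consequently each basis element $p$, $p^\ast$, or $pq^\ast$ lies in a unique block determined by the terminal behavior of $p$ and $q$. For each sink $v_j$, let $Q_j=\{q_1^{(j)},\dots,q_{n_j}^{(j)}\}$ be the paths ending at $v_j$; for each cycle $c_i$, fix a vertex $w_i$ on $c_i$ and let $P_i=\{p_1^{(i)},\dots,p_{m_i}^{(i)}\}$ be the paths ending at $w_i$ that do not contain $c_i$ itself. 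Define
\[
E_{s,t}^{(j)}:=q_s^{(j)}(q_t^{(j)})^\ast,\qquad E_{s,t}^{(i)}(n):=\begin{cases} p_s^{(i)}\,c_i^{\,n}\,(p_t^{(i)})^\ast & n\geq 0,\\ p_s^{(i)}\,(c_i^\ast)^{-n}\,(p_t^{(i)})^\ast & n<0.\end{cases}
\]

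Next I would verify, using relations (2)--(4) of the definition of $L_K(E)$, that these elements behave as matrix units over $K$ in the sink blocks and as matrix units over $K[x,x^{-1}]$ in the cycle blocks. The decisive point is that, because $c_i$ has no exit, the relation $c_i^\ast c_i = w_i$ reduces to a genuine two-sided inverse at $w_i$: within the corner $w_i L_K(E)w_i$ the element $c_i$ is invertible with inverse $c_i^\ast$. Combined with the orthogonality $e^\ast f=\delta_{e,f}r(e)$ and a routine induction on length, this yields
$E_{s,t}^{(i)}(n)\,E_{s',t'}^{(i)}(n')=\delta_{t,s'}E_{s,t'}^{(i)}(n+n')$
and the analogous identity on the sink side. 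The universal property of $L_K(E)$ then produces $K$-algebra homomorphisms from each $\mathbb{M}_{m_i}(K[x,x^{-1}])$ and each $\mathbb{M}_{n_j}(K)$ into $L_K(E)$ whose images are mutually orthogonal two-sided summands.

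The main obstacle, and the step where Theorem \ref{B-LPA} does the real work, is to show that the union of the images of the canonical bases of these matrix algebras exhausts the basis given in Theorem \ref{B-LPA}. For the sink blocks this is immediate. For the cycle blocks one must check that each basis element $pq^\ast$ with $p,q$ ending at a vertex of $c_i$ (subject to the ``last edges distinct, or equal but not special'' condition) can be rewritten uniquely as $p_s^{(i)}c_i^{\,n}(p_t^{(i)})^\ast$ (or with $(c_i^\ast)^{-n}$ when $n<0$) for a unique triple $(s,t,n)$; the non-specialness condition in Theorem \ref{B-LPA} is precisely what absorbs the ambiguity in the power of $c_i$, because the special edge at $w_i$ is the one picked out to start $c_i$. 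Once this bijection is established, the $K$-linear span of each block equals the corresponding matrix summand, the blocks are mutually orthogonal, and their direct sum is $L_K(E)$, completing the proof.
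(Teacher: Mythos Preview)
Your proposal is correct and follows precisely the approach the paper itself indicates: the paper's proof consists entirely of the sentence ``Using Theorem \ref{B-LPA} and repeating verbatim the proofs of \cite[Theorems 3.8 and 3.10]{apm:lflpa}, we note that the result extends to the case of Leavitt path algebras with coefficients in a commutative ring,'' and what you have written is exactly a sketch of how that verbatim repetition goes, with the explicit basis of Theorem \ref{B-LPA} replacing the field-coefficient basis used in \cite{apm:lflpa}. One small slip of phrasing: it is not the universal property of $L_K(E)$ that produces maps \emph{into} $L_K(E)$ from the matrix algebras, but rather the matrix-unit relations you verified (equivalently, the universal property of the matrix rings); the universal property of $L_K(E)$ goes the other way.
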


\section{The homological dimensions of Leavitt path algebras}
The main goal of this section is to give sharp bounds for the homological dimensions of the Leavitt path algebra $L_K(E)$ of a finite graph $E$ with coefficients in a commutative ring $K$, as well as establish a formula for the homological dimensions of $L_K(E)$ when $K$ is a commutative unital algebra over a field.

We begin by recalling some general notions of the theory of homological dimensions. We shall say that the left $R$-module $M$ has \emph{projective dimension} $\leq n$ if
$M$ has a projective resolution $0\rightarrow P_{n}\rightarrow\cdots\rightarrow P_0\rightarrow M\rightarrow 0$, where each $P_i\ (i= 0,\ldots, n)$ is a projective left $R$-module. The least such integer $n$ is called the projective dimension of $M$ and denoted by $\mathrm{pd_R}(M)$. If no such integer exists the dimension is denoted to be $\infty$. The \emph{left global dimension} of a ring $R$ is defined to be
\[
\mathrm{l.gl.dim}(R) := \sup\{\mathrm{pd}_R(M): M\in |R\mbox{-}\mathrm{Mod}|\}\leq\infty.
\]

The \emph{right global dimension} of $R$, denoted by $\mathrm{r.gl.dim}(R)$, is defined similarly. If $R$ is a commutative ring, we shall write $\mathrm{gl.dim}(R)$ for the common value of $\mathrm{l.gl.dim}(R)$ and $\mathrm{r.gl.dim}(R)$.

We next go on define the flat dimensions of left $R$-modules. We shall say that the left $R$-module $M$ has \emph{flat dimension} $\leq n$ if $M$ has a resolution $0\rightarrow F_{n}\rightarrow\cdots\rightarrow F_0\rightarrow M\rightarrow 0$, where each $F_i\ (i= 0,\ldots, n)$ is a flat left $R$-module. The least such integer $n$ is called the flat dimension of $M$ and denoted by $\mathrm{fd_R}(M)$.

If no such integer exists the dimension is denoted to be $\infty$. The \emph{left weak global
dimension} of a ring $R$ is defined to be
\[
\mathrm{w.gl.dim}(R) = \sup\{\mathrm{fd_R}(M): M\in |R\mbox{-}\mathrm{Mod}|\}\leq\infty.
\]

The \emph{right weak global dimension} of $R$ is defined similarly, by using right $R$-modules. It is well-known that the left and right weak global dimensions of $R$ are the same (see, e.g., \cite[Theorem 5.63, page 185]{l:lomar}). The weak global dimension of the ring $R$ is at most equal to its left or right global dimension. For another properties of the homological dimensions we may refer to \cite{ce:ha} and \cite{l:lomar}, for example.

The following lemma provides us with a sharp lower bound for the homological dimensions of
Leavitt path algebras.

\begin{lem}\label{lem4.1}
Let $K$ be a commutative ring and $E$ a finite graph. Then we have
\[
\mathrm{gl.dim}(K) \leq \mathrm{l.gl.dim}\left(L_K(E)\right) \text{ and } \mathrm{w.gl.dim}(K) \leq \mathrm{w.gl.dim}\left(L_K(E)\right).
\]
\end{lem}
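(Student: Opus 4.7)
The plan is to leverage the structural results in Corollary \ref{cor3.4}: the subring $K_0 := \prod_{v\in E^0}K$ embeds in $L_K(E)$, is a $K_0$-module direct summand, and $L_K(E)$ is projective over $K_0$. Since a module over a finite product of rings decomposes canonically into a direct sum indexed by the factors, one has $\mathrm{gl.dim}(K_0)=\mathrm{gl.dim}(K)$ and $\mathrm{w.gl.dim}(K_0)=\mathrm{w.gl.dim}(K)$, so it suffices to prove $\mathrm{gl.dim}(K_0)\leq\mathrm{l.gl.dim}(L_K(E))$ and the analogous flat version.

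For the projective inequality, I would first sharpen Corollary \ref{cor3.4}(2) by reading off from the monomial basis of Theorem \ref{B-LPA} that the complement of $K_0$ may be chosen to be the $K$-span $C$ of all basis elements of positive length; since each such element has a well-defined source and range, $C$ is stable under both left and right multiplication by the vertices, and hence $L_K(E)=K_0\oplus C$ as $K_0$-bimodules. For any left $K_0$-module $M$, form the left $L_K(E)$-module $N:=L_K(E)\otimes_{K_0}M$. The bimodule splitting yields $N\cong M\oplus(C\otimes_{K_0}M)$ as left $K_0$-modules, so $M$ is a $K_0$-module direct summand of $N$.

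Next pick a projective resolution $P_\bullet\to N$ in $L_K(E)\mbox{-}\mathrm{Mod}$. Since $L_K(E)$ is projective over $K_0$ by Corollary \ref{cor3.4}(3), every projective $L_K(E)$-module is also projective as a $K_0$-module, so $P_\bullet\to N$ remains a projective resolution of $N$ over $K_0$. Therefore
\[
\mathrm{pd}_{K_0}(M)\leq\mathrm{pd}_{K_0}(N)\leq\mathrm{pd}_{L_K(E)}(N)\leq\mathrm{l.gl.dim}(L_K(E)),
\]
and taking the supremum over $M$ gives $\mathrm{gl.dim}(K_0)\leq\mathrm{l.gl.dim}(L_K(E))$. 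The flat dimension statement follows by the identical argument with "projective" replaced by "flat" throughout, using that projective modules are flat.

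The main obstacle is arranging the decomposition $L_K(E)=K_0\oplus C$ to be a $K_0$-bimodule splitting rather than merely a one-sided one, which is precisely what is needed in order for $M=K_0\otimes_{K_0}M$ to be a $K_0$-direct summand of $N$. Fortunately this refinement is immediate from the explicit monomial basis of Theorem \ref{B-LPA}, since each basis element is an eigenvector for both the left and right $K_0$-actions.
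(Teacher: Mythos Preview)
Your proof is correct and follows essentially the same strategy as the paper's: pass through $K_0=\prod_{v\in E^0}K$, realize every $K_0$-module as a $K_0$-summand of an $L_K(E)$-module, and use projectivity of $L_K(E)$ over $K_0$ to transfer projective/flat dimensions. The only difference is that the paper uses coinduction $B=\mathrm{Hom}_{K_0}(L_K(E),A)$ in place of your induction $N=L_K(E)\otimes_{K_0}M$; your explicit verification that the complement $C$ is a $K_0$-bimodule (from the monomial basis of Theorem~\ref{B-LPA}) is a genuine clarification, since both the tensor and the Hom construction need this two-sided splitting for the summand argument to go through.
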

\begin{proof}
We start to an $K$-algebra $K_0 = \prod\limits_{v\in E^0}K.$ By Corollary \ref{cor3.4} (1) and (2), the ring $K_0$ may be considered as be to a subring of $L_K(E)$ such that $K_0$  is a direct summand of $L_K(E)$ as an $K_0$-module. Therefore, we may write $L_K(E) = K_0\oplus M$ for some $K_0$-module $M$.

Take any $K_0$-module $A$, and set $B = \mathrm{Hom}_{K_0}\left(L_K(E), A\right)$. Then $B$ carries the structure of a left $L_K(E)$-module, and $B \cong A\oplus \mathrm{Hom}_{K_0}(M, A)$ (as
$K_0$-modules). This implies that
\[
\mathrm{pd_{K_0}}(A) \leq \mathrm{pd_{K_0}}(B) \text{ and }
\mathrm{fd_{K_0}}(A) \leq \mathrm{fd_{K_0}}(B),
\]
by \cite[Exercises 7, page 123]{ce:ha}. Since Corollary \ref{cor3.4} (3), $L_K(E)$ is a projective $K_0$-module, so
\[
\mathrm{pd_{K_0}}(B) \leq \mathrm{pd_{L_K(E)}}(B) \text{ and } \mathrm{fd_{K_0}}(B) \leq \mathrm{fd_{L_K(E)}}(B),
\]
by \cite[Exercises 10, page 123]{ce:ha}. But then
\[
\mathrm{pd_{K_0}}(A) \leq \mathrm{pd_{L_K(E)}}(B) \text{ and } \mathrm{fd_{K_0}}(A) \leq \mathrm{fd_{L_K(E)}}(B).
\]

Passing to the homological dimensions, we get that
\[
\mathrm{gl.dim}(K) = \mathrm{gl.dim}(K_0) \leq \mathrm{l.gl.dim}\left(L_K(E)\right)
\]
and
\[
\mathrm{w.gl.dim}(K) = \mathrm{w.gl.dim}(K_0) \leq \mathrm{w.gl.dim}\left(L_K(E)\right),
\]
finishing the proof.
\end{proof}

Our next goal is to give a sharp upper bound for the homological dimensions of Leavitt path algebras. To do so, we first need the following important lemma.

\begin{lem}\label{lem4.2}
Let $K$ be a commutative ring and $E = \left(E^0, E^1, r, s\right)$ a finite graph with that $\{v_1, v_2,\ldots, v_k\}\subseteq E^0$ is the set of regular vertices. For each $1\leq i\leq k$,
let $E_i$ denote the graph with the same vertices as $E$ and $v_i$ emits the same edges as it does in $E$, but all other vertices do not emit any edge. Then the following statements hold:
\begin{itemize}
\item[(1)] $\mathrm{l.gl.dim}\left(L_K(E)\right)\leq \max\limits_{1 \le i \le k}\Bigr\{\mathrm{gl.dim}(K) +1,\ \mathrm{l.gl.dim}\bigl(L_K(E_i)\bigr)\Bigr\}$,
\item[(2)] $\mathrm{w.gl.dim}\left(L_K(E)\right)\leq \max\limits_{1 \le i \le k}\Bigl\{\mathrm{w.gl.dim}(K) +1,\ \mathrm{w.gl.dim}\bigl(L_K(E_i)\bigr)\Bigr\}.$
\end{itemize}
\end{lem}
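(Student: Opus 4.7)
The plan is to realize $L_K(E)$ as an iterated amalgamated coproduct of the Leavitt path algebras $L_K(E_i)$ over the subring $K_0 = \prod_{v\in E^0} K$, and then to apply W.~Dicks's homological dimension bound for ring coproducts (\cite[Corollary 7]{dicks:mvpocor}), which is precisely the kind of Mayer--Vietoris-type result one needs to compare the global (resp.\ weak global) dimension of a coproduct to those of its factors and base.

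For the coproduct identification, I would use the universal property of $L_K(E)$ from Definition~2.1 to produce mutually inverse $K$-algebra maps $L_K(E) \leftrightarrow L_K(E_1) *_{K_0} \cdots *_{K_0} L_K(E_k)$. Because $E$ is finite, every vertex emitting any edge is automatically regular, so each $e\in E^1$ has source equal to a unique $v_i$ and hence belongs to exactly one $E_i$. The defining relations then partition as follows: vertex orthogonality is inherited from $K_0$; the source/range identities and the relation $e^{\ast}f = \delta_{e,f}r(e)$ for $e,f\in E_i$ hold in the factor $L_K(E_i)$; the CK2 relation at $v_i$ only involves edges emitted by $v_i$ and so already holds in $L_K(E_i)$; and the only cross-factor identities $e^{\ast}f = 0$ for $e\in E_i$, $f\in E_j$ with $i\ne j$ follow from $e^{\ast}f = e^{\ast}v_i v_j f = 0$, since $v_i v_j = 0$ in $K_0$.

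Next, I would verify the hypotheses of Dicks's result, namely that each $L_K(E_i)$ is projective as both a left and a right $K_0$-module. The left-sided statement is precisely Corollary~\ref{cor3.4}(3) applied to $E_i$; right-sided projectivity follows from the canonical $*$-involution on $L_K(E_i)$, which fixes $K_0$ pointwise and gives an anti-isomorphism $L_K(E_i) \cong L_K(E_i)^{\mathrm{op}}$. Applying Dicks's Corollary 7 inductively on $k$ (or in its multi-factor form) then yields
\[
\mathrm{l.gl.dim}(L_K(E)) \leq \max\Bigl\{\mathrm{gl.dim}(K_0) + 1,\ \max_{1\leq i\leq k}\mathrm{l.gl.dim}(L_K(E_i))\Bigr\},
\]
and the analogous bound for weak global dimensions. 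Finally, since $K_0$ is a finite direct product of copies of $K$, the category $K_0\mbox{-}\mathrm{Mod}$ splits as a product of $k$ copies of $K\mbox{-}\mathrm{Mod}$, which gives $\mathrm{gl.dim}(K_0) = \mathrm{gl.dim}(K)$ and $\mathrm{w.gl.dim}(K_0) = \mathrm{w.gl.dim}(K)$, completing both (1) and (2).

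The main obstacle I anticipate is the careful bookkeeping in the coproduct decomposition: one must confirm that no ``hidden'' relation in $L_K(E)$ escapes the list of relations either already present in some $L_K(E_i)$ or forced by the $K_0$-orthogonality of distinct vertices. Once this identification is in place, the projectivity verification via the involution and the invocation of Dicks's corollary are essentially mechanical.
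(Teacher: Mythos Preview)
Your proposal is correct and follows essentially the same route as the paper: identify $L_K(E)$ as the pushout (amalgamated coproduct) of the $L_K(E_i)$ over $K_0=\prod_{v\in E^0}K$ via the universal property, use Corollary~\ref{cor3.4}(3) to get projectivity of each factor over $K_0$, and then invoke Dicks's Corollary~7 to obtain the bound, together with $\mathrm{gl.dim}(K_0)=\mathrm{gl.dim}(K)$. The only cosmetic difference is that the paper routes the verification of Dicks's hypotheses through \cite[Theorem~9(iii)]{dicks:mvpocor} (flatness of $L_K(E)$ over each $L_K(E_i)$), whereas you check two-sided $K_0$-projectivity directly using the $*$-involution; both feed into the same corollary.
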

\begin{proof}
Let $K_0 = \prod\limits_{v\in E^0}K$ and we denote by $\left\{a_v\ |\ v\in E^0\right\}$ the standard basis of the free $K$-module $K_0$. Let $\varphi_i: K_0\longrightarrow L_K(E_i)$ and
$\psi_i: L_K(E_i)\longrightarrow L_K(E)$ $(1\leq i\leq k)$ be the ring homomorphisms which are defined respectively by: $\varphi_i(a_v) = v$,  and $\psi_i(v) = v$, $\psi_i(e) = e$ and
$\psi_i(e^{\ast}) = e^{\ast}$.

We claim that the following diagram

$$
\xymatrix{
 && L_K(E_1)\ar[drr]^{\psi_{1}}\\
 K_0 \ar[urr]^{\varphi_{1}} \ar[drr]_{\varphi_{k}} && \vdots && L_K(E)\\
 && L_K(E_k)\ar[urr]_{\psi_{k}}
 }
$$
is a pushout in the category of rings. Indeed, it is easy to see that the above diagram is commutative, that means, $\psi_i\varphi_i = \psi_j\varphi_j$ for all $1 \le i,j \le k$.

Assume that $S$ is a ring and $\alpha_i: L_K(E_i)\longrightarrow S$ $(1\leq i\leq k)$ are ring
homomorphisms such that the following diagram
$$
 \xymatrix{
 && L_K(E_1)\ar[drr]^{\alpha_{1}}\\
 K_0 \ar[urr]^{\varphi_{1}} \ar[drr]_{\varphi_{k}} && \vdots && S\\
 && L_K(E_k)\ar[urr]_{\alpha_{k}}
 }
$$
is commutative, that means, $\alpha_i\varphi_i = \alpha_j\varphi_j$ for all $1 \le i,j \le k$. We then have that $\alpha_i(v) = \alpha_j(v)$ $(1\leq i, j\leq k)$, $\sum\limits_{v\in E^0_i}\alpha_i(v) = 1$ and $\alpha_i(v)\alpha_i(v') = \delta_{v, v'}\alpha_i(v)$.

For any $v\in E^0$, let $x_v = \alpha_1(v)$ and we have a family $\{x_v\ |\ v\in E^0\}$ of
orthogonal idempotents in $S$ such that $\sum\limits_{v\in E^0}x_v = 1$.

For any $1\leq i\leq k$ and any $e\in s^{-1}(v_i)$, let $y_e = \alpha_i(e)$ and
$z_{e^*} = \alpha_{i}(e^*)$. We then have that
$$
x_{s(e)}y_e = \alpha_1(s(e))\alpha_i(e)
= \alpha_i(s(e))\alpha_i(e)= \alpha_i(s(e)e) =\alpha_i(e)=y_e,
$$
$$
y_{e}x_{r(e)} = \alpha_i(e)\alpha_1(r(e)) = \alpha_i(e)\alpha_i(r(e))= \alpha_i(er(e)) =
\alpha_i(e)=y_e.
$$

Similarly $$x_{r(e)}z_{e^*} = z_{e^*}\ \text{ and }\ z_{e^*}x_{s(e)} = z_{e^*}.$$

Take any $e$ and $f\in E^1$. If there exists a regular vertex $v_i$ such that $s(e) = v_i=s(f)$, then
$$
z_{e^*}y_f = \alpha_i(e^*)\alpha_i(f) =  \alpha_i(e^*f)=  \alpha_i(\delta_{e, f}r(e))
= \delta_{e, f}\alpha_i(r(e))=\delta_{e, f}x_{r(e)}.
$$

Otherwise, we get that $s(f) \neq s(e)$. But then
$$
z_{e^*}y_f = \left(z_{e^*}x_{s(e)}\right) \left(x_{s(f)}y_f\right)= z_{e^*}\left(x_{s(e)}x_{s(f)}\right)y_f=0.
$$

For any regular vertex $v_i$, we have that
$$
\sum_{e\in s^{-1}(v_i)}y_ez_{e^*}= \sum_{e\in s^{-1}(v_i)}\alpha_i(e)\alpha_i(e^*) =
\alpha_i\sum_{e\in s^{-1}(v_i)}ee^*= \alpha_i(v_i)=x_{v_i}.
$$

From these observations, we have a family
$$
\left\{x_e, y_e, z_{e^*}\mid v\in E^0, e\in E^1,
e^{\ast}\in (E^1)^{\ast}\right\}
$$
of elements in $S$ satisfying the relations (1) -- (4) of Definition 3.1. By the universal property of $L_K(E)$, there exists a unique ring homomorphism $\alpha: L_K(E)\longrightarrow S$ such that $\alpha(v) = x_v$, $\alpha(e)= y_e$ and $\alpha(e^*) = z_{e^*}$. Also, it is easy to check that $\alpha_i = \alpha\psi_i$ $(1\leq i\leq k)$, thereby establishing the claim.

Using this observation, Corollary \ref{cor3.4} (3) and \cite[Theorem 9 (iii)]{dicks:mvpocor}, we
immediately have that $L_K(E)$ is a flat left $L_K(E_i)$-module, so we get the desired
conclusions, by \cite[Corollary 7, and Remarks (i) and (ii), pape 567]{dicks:mvpocor}.
\end{proof}

For clarification, we illustrate the ideas which arise in Lemma \ref{lem4.2} by presenting
the following example.

\begin{exas}
Let $E$ be the graph

$$\xymatrix{\bullet^{v_1}\ar@/^.5pc/[r]  &\bullet^{v_2} \ar@/^.5pc/[l]\ar[r]& \bullet^{v_3}\ar@(ul,ur)}.$$
Clearly, $v_i$'s are regular vertices, and hence, we have the graphs $E_i$ ($1\leq i\leq 3$) as follows:

$$E_1 = \xymatrix{\bullet^{v_1}\ar[r]  &\bullet^{v_2}& \bullet^{v_3}}$$

$$E_2 = \xymatrix{\bullet^{v_1}&\bullet^{v_2}\ar[l]  \ar[r] & \bullet^{v_3}}$$

$$E_3 = \xymatrix{\bullet^{v_1}&\bullet^{v_2}& \bullet^{v_3}\ar@(ul,ur)}.$$

For any commutative ring $K$, by Corollary \ref{cor3.5}, we obtain that
$$
L_K(E_1) \cong \mathbb{M}_2(K)\oplus K,\ L_K(E_2) \cong \mathbb{M}_2(K)\oplus \mathbb{M}_2(K)
$$
and
$$
L_K(E_3) \cong K\oplus K\oplus K[x, x^{-1}].
$$

But then, as the homological dimensions are Morita invariants, the first two displayed ring isomorphisms yield that
\[
\mathrm{l.gl.dim}\left(L_K(E_1)\right) = \mathrm{gl.dim}(K) = \mathrm{l.gl.dim}\left(L_K(E_2)\right)
\]
and
\[
\mathrm{w.gl.dim}\left(L_K(E_1)\right) = \mathrm{w.gl.dim}(K) = \mathrm{w.gl.dim}\left(L_K(E_2)\right).
\]
Also, using \cite[Theorem 1.7]{douglas:twgdogroar} and \cite[Theorem 2]{dicks:mvpocor},
we immediately get that $\mathrm{gl.dim}\left(K[x, x^{-1}]\right) = \mathrm{gl.dim}(K)+1$ and $\mathrm{w.gl.dim}\left(K[x, x^{-1}]\right) = \mathrm{w.gl.dim}(K)+1$, respectively. From these observations and the last displayed ring isomorphism yield that
\[
\mathrm{l.gl.dim}\left(L_K(E_3)\right) = \mathrm{gl.dim}(K) +1,\ \mathrm{w.gl.dim}\left(L_K(E_3)\right) = \mathrm{w.gl.dim}(K)+1.
\]
From these observations, and Lemmas \ref{lem4.1} and \ref{lem4.1}, we get that
\[
\mathrm{gl.dim}(K)\leq \mathrm{l.gl.dim}\left(L_K(E)\right) \leq \mathrm{gl.dim}(K) +1
\]
and
\[
\mathrm{w.gl.dim}(K)\leq \mathrm{w.gl.dim}\left(L_K(E)\right) \leq \mathrm{w.gl.dim}(K) +1.
\]
\hfill$\square$
\end{exas}

Lemma \ref{lem4.2} allows us to reduce calculating the homological dimensions of the Leavitt path algebra $L_K(E)$ to calculating the homological dimensions of $L_K(F)$, where $F$ is the graph with the same vertices as $E$ and its all edges are only edges emitting from some regular vertex $v\in E^0$ (other vertices do not emit any edge). Notice that $F$ is a no-exit graph if and only if $E$ contains a loop based at $v$, which has no an exit. In this case, we may calculate easily the homological dimensions of $L_K(F)$, by using Corollary \ref{cor3.5}, \cite[Theorem 1.7]{douglas:twgdogroar} and \cite[Theorem 2]{dicks:mvpocor}. Therefore, the difficulty is how to calculate the homological dimensions of $L_K(F)$ when $F$ contains a loop having an exit. To remedy this problem, our idea is based on Abrams, Louly, Pardo and Smith's useful result \cite[Proposition 1.8]{alps:fiitcolpa}. But, we first recall the following notion.

\begin{defn}[{\textit{cf.}~\cite[Definition~1.6]{alps:fiitcolpa}}]
Let $E = \left(E^0, E^1, r, s\right)$ be a graph, and let $v\in E^0$. Let $v^*$ and $f$ be symbols
not in $E^0\cup E^1$. We form the \emph{expansion graph} $E_v$ from $E$ at $v$ as follows:
\[E^0_v = E^0 \cup \{v^*\},\] \[E^1_v = E^1 \cup \{f\}, \ \]

\begin{equation*}
s_{E_v}(e) = \left\{
\begin{array}{lcl}
v&  & \text{if }e = f, \\
&  &  \\
v^*&  & \text{if }s_{E}(e) = v,\\
&  &  \\
s_{E}(e)&  & \text{otherwise},%
\end{array}%
\right.
\end{equation*}%

\begin{equation*}
r_{E_v}(e) = \left\{
\begin{array}{lcl}
v^*&  & \text{if }e = f, \\
&  &  \\
r_{E}(e)&  & \text{otherwise. \ \ }%
\end{array}%
\right.
\end{equation*}%
\end{defn}

\begin{exas}
Let $E$ be the graph
$$
\xymatrix{\bullet^{v}\ar@(dl,ul)\ar[r]& \bullet^{w}\ar@(ul,ur)}.
$$ Then the expansion graph $E_v$ is
$$
\xymatrix{\bullet^{v}\ar@/^.5pc/[r]^{f}  &\bullet^{v^*} \ar@/^.5pc/[l]\ar[r]& \bullet^{w}\ar@(ul,ur)}.$$\hfill$\square$
\end{exas}

In \cite[Proposition 1.8]{alps:fiitcolpa}, Abrams, Louly, Pardo and Smith showed that if $K$ is a field and $E$ is a finite graph such that $L_K(E)$ is simple, then $L_K(E)$ is Morita equivalent to $L_K(E_v)$. In the following proposition, we extend this result to the Leavitt path algebra of a finite graph satisfying Condition (L) with coefficients in a commutative ring. Recall a graph $E$ is said to satisfy \emph{Condition} (L) if every cycle in $E$ has an exit.

\begin{prop}\label{prop4.6}
Let $K$ be a commutative ring, $E$ a finite graph satisfying Condition \textsc{(L)}, and let $v\in E^0$. Then $L_K(E)$ is Morita equivalent to $L_K(E_v)$.
\end{prop}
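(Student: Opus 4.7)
The plan is to exhibit a full idempotent $\varepsilon \in L_K(E_v)$ whose corner is isomorphic to $L_K(E)$, from which Morita equivalence of unital rings gives the conclusion. The natural choice is $\varepsilon := \sum_{w \in E^0} w \in L_K(E_v)$, so that $\varepsilon + v^* = 1$. Fullness is the easy half: in $L_K(E_v)$ the vertex $v$ is regular and emits only the new edge $f$, so relation~(4) of Definition~3.1 at $v$ gives $v = ff^*$, and therefore
\[
v^* \;=\; f^*f \;=\; f^*(ff^*)f \;=\; f^*\,v\,f \;\in\; L_K(E_v)\,\varepsilon\,L_K(E_v),
\]
whence $1 = \varepsilon + v^* \in L_K(E_v)\,\varepsilon\,L_K(E_v)$.

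For the corner, I would construct a $K$-algebra map $\phi\colon L_K(E)\to \varepsilon L_K(E_v)\varepsilon$ via the universal property by
\[
\phi(w)=w\ (w\in E^0),\quad
\phi(e)=\begin{cases}fe,& s(e)=v,\\ e,& s(e)\ne v,\end{cases}\quad
\phi(e^*)=\begin{cases}e^*f^*,& s(e)=v,\\ e^*,& s(e)\ne v.\end{cases}
\]
Verification of relations~(1)--(4) is routine; the key computations are (i)~$e^*f = 0$ for every $e \in E^1$ (since $e\neq f$ in the alphabet of $E_v$), (ii)~for relation~(3) with $s(e)=s(g)=v$ the identity $f^*f=v^*$ collapses $\phi(e^*)\phi(g)= e^*f^*fg = e^*v^*g = e^*g = \delta_{e,g}r(e)$, and (iii)~for relation~(4) at a regular $v\in E^0$ one computes
\[
\sum_{s(e)=v}\phi(e)\phi(e^*) \;=\; f\Big(\sum_{s_{E_v}(e)=v^*} ee^*\Big)f^* \;=\; f\,v^*\,f^* \;=\; ff^* \;=\; v,
\]
using relation~(4) of $L_K(E_v)$ at the regular vertex $v^*$.

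The main work is then showing that $\phi$ is bijective, which I would carry out by direct comparison of the canonical $K$-bases from Theorem~\ref{B-LPA}. Choose the specialization function on $E_v$ so that $\gamma_{E_v}(v)=f$ and, when $v$ is not a sink in $E$, $\gamma_{E_v}(v^*)=\gamma_E(v)$; then an edge $e\in E^1$ is special in $E_v$ if and only if it is special in $E$. The basis elements of $L_K(E_v)$ lying in $\varepsilon L_K(E_v)\varepsilon$ are precisely those whose source and range lie in $E^0$, and every path in $E_v$ with this property arises in exactly one way from a path in $E$ by inserting $f$ immediately after each internal visit to $v$ (and at the front when the source is $v$). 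This yields a bijection between paths in $E$ and corner-paths in $E_v$ under which $\phi$ is merely a relabeling, and pulling back the basis constraints on $pq^*$-elements matches them with those on $L_K(E)$. The main obstacle, and the delicate point, is the bookkeeping around the ``last edge distinct or not special'' condition when $r(e_n)=r(g_m)=v$: here one must invoke the compatible choice of $\gamma_{E_v}$ above to align the special-edge conventions on the two sides. Once this is done, $\phi$ carries a basis bijectively to a basis, so $\phi$ is an isomorphism, and Morita equivalence of $L_K(E)$ and $L_K(E_v)$ follows from the fullness of $\varepsilon$.
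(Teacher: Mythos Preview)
Your construction of the full idempotent $\varepsilon=\sum_{w\in E^0}w$ and of the map $\phi$ coincides with the paper's, but you take a genuinely different route to bijectivity. The paper obtains injectivity of $\phi$ by invoking Tomforde's Cuntz--Krieger uniqueness theorem for Leavitt path algebras over commutative rings \cite[Theorem~6.5]{tomf:lpawciacr}, which applies precisely because $E$ satisfies Condition~(L); surjectivity onto the corner is handled by repeating the argument of \cite[Proposition~1.8]{alps:fiitcolpa}. You instead carry out a direct comparison of the $K$-bases furnished by Theorem~\ref{B-LPA}, using a compatible choice of special edges on $E_v$ so that $\phi$ sends basis elements to basis elements (the key observation being that corner basis elements $PQ^*$ cannot end at $v^*$, since then both $P$ and $Q$ would have last edge $f$, which is special). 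This is more hands-on but entirely self-contained within the paper's framework, and it has the pleasant side effect that Condition~(L) is never invoked---your argument actually proves the Morita equivalence for an arbitrary finite graph. The paper's route is shorter to write and leverages a powerful off-the-shelf uniqueness theorem, at the cost of importing the hypothesis Condition~(L), which your argument reveals to be inessential.
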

\begin{proof} We essentially follow the ideas in the proof of \cite[Proposition 1.8]{alps:fiitcolpa}.

For each $w\in E^0$, define $Q_w = w$. For each $e\in s^{-1}(v)$, define $T_e = fe$ and $T_{e^*} = f^*e^*$. For $e\in E^1$ otherwise, define $T_e = e$ and $T_{e^*} = e^*$. It is easy to check that $\{Q_w, T_e, T_{e^*}\mid w\in E^0, e\in E^1, e^*\in (E^1)^*\}$ is a family of elements in $L_R(E_v)$ satisfying the relations (1) - (4) of Definition 3.1. Note that
$$
\sum_{e\in s^{-1}(v)}T_eT_{e^*} = f\sum_{e\in s^{-1}(v)}ee^*f^* = fvf^*
= ff^* = r(f) = v= Q_v.
$$

Therefore, by the universal property of $L_K(E)$, there is an $K$-algebra homomorphism $\pi: L_K(E)\longrightarrow L_K(E_v)$ such that $\pi(w) = Q_w$, $\pi(e)= T_e$, and $\pi(e^*) = T_{e^*}$. Repeating verbatim the proof of \cite[Proposition 1.8]{alps:fiitcolpa}, we get that
$$
\pi(L_K(E)) = \pi\left(1_{L_K(E)}\right)L_K(E_v)\pi\left(1_{L_K(E)}\right),
$$
where $\pi\left(1_{L_K(E)}\right)= \sum\limits_{w\in E^0}w$. Furthermore, we always have
$$
v^* = f^*f = f^*\sum_{w\in E^0}wf = f^*\pi\left(1_{L_K(E)}\right)f
\in L_K(E_v)\pi\left(1_{L_K(E)}\right)L_K(E_v),
$$
so $L_K(E_v)\pi\left(1_{L_K(E)}\right)L_K(E_v)=L_K(E_v)$. This implies that $L_K(E_v)$ is Morita equivalent to $\pi\left(1_{L_K(E)}\right)L_K(E_v)\pi\left(1_{L_K(E)}\right)$.

On the other hand, by \cite[Proposition 3.4]{tomf:lpawciacr}, $\pi(rw) = rQ_w = rw \neq 0$ for all $w\in E^0$ and $0\neq r\in K$. From this observation and $E$ is a graph satisfying Condition (L), we get that the homomorphism $\pi$ is injective, by \cite[Theorem 6.5]{tomf:lpawciacr}. Therefore, $L_K(E_v)$ is Morita equivalent to $L_K(E)$, finishing our proof.
\end{proof}

Using Proposition \ref{prop4.6}, Lemmas \ref{lem4.1} and \ref{lem4.2}, we establish sharp bounds for the homological
dimensions of Leavitt path algebras.

\begin{thm}\label{thm4.7}
Let $K$ be a commutative ring and $E$ a finite graph. Then the following statements hold:
\begin{itemize}
\item[(1)] $\mathrm{gl.dim}(K)\leq \mathrm{l.gl.dim}\left(L_K(E)\right) \leq \mathrm{gl.dim}(K) +1$;
\item[(2)] $\mathrm{w.gl.dim}(K)\leq \mathrm{w.gl.dim}\left(L_K(E)\right) \leq \mathrm{w.gl.dim}(K) +1$.
\end{itemize}
\end{thm}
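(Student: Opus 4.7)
The lower bounds $\mathrm{gl.dim}(K) \le \mathrm{l.gl.dim}(L_K(E))$ and $\mathrm{w.gl.dim}(K) \le \mathrm{w.gl.dim}(L_K(E))$ are exactly Lemma~\ref{lem4.1}, so the task is to prove the two upper bounds. Because Lemmas~\ref{lem4.1} and~\ref{lem4.2}, Corollary~\ref{cor3.5}, Proposition~\ref{prop4.6}, and the identity $\mathrm{gl.dim}(K[x,x^{-1}]) = \mathrm{gl.dim}(K)+1$ (used in the example between Lemma~\ref{lem4.2} and Proposition~\ref{prop4.6}) all come with exact weak-dimension analogues, the argument for~(1) transfers verbatim to~(2); I only write it out for~(1).

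By Lemma~\ref{lem4.2} it suffices to show $\mathrm{l.gl.dim}(L_K(E_i)) \le \mathrm{gl.dim}(K)+1$ for each $1 \le i \le k$. In each $E_i$ only the single regular vertex $v_i$ emits, so every cycle in $E_i$ must be a loop based at $v_i$, and I split into three cases. \emph{Case~A:} $v_i$ emits no loop. Then $E_i$ is acyclic, hence vacuously no-exit, and Corollary~\ref{cor3.5} expresses $L_K(E_i)$ as a finite direct sum of matrix algebras over $K$; Morita invariance of the global dimension then gives $\mathrm{l.gl.dim}(L_K(E_i)) = \mathrm{gl.dim}(K)$. \emph{Case~B:} $v_i$ emits exactly one edge, which is a loop. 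Then $E_i$ is no-exit and Corollary~\ref{cor3.5} produces a direct summand isomorphic to $K[x,x^{-1}]$, from which $\mathrm{l.gl.dim}(L_K(E_i)) = \mathrm{gl.dim}(K)+1$. \emph{Case~C:} $v_i$ emits a loop together with at least one further edge. Here every loop at $v_i$ has an exit, so $E_i$ satisfies Condition~(L), and I apply Proposition~\ref{prop4.6} to expand $E_i$ at $v_i$. The resulting graph $(E_i)_{v_i}$ has Morita-equivalent Leavitt path algebra and contains exactly two regular vertices: $v_i$, now emitting only the new edge $f : v_i \to v^*$, and $v^*$, which inherits the $\ell \ge 2$ former edges out of $v_i$, with every former loop now appearing as an edge $v^* \to v_i$.

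To finish Case~C, I apply Lemma~\ref{lem4.2} once more, this time to $(E_i)_{v_i}$. The two one-emitter subgraphs it produces are both acyclic: the one where only $v_i$ emits consists of the single edge $f$, and the one where only $v^*$ emits cannot support a cycle because no remaining edge enters $v^*$. Each therefore falls under Case~A and contributes $\mathrm{gl.dim}(K)$, so Lemma~\ref{lem4.2} yields $\mathrm{l.gl.dim}(L_K((E_i)_{v_i})) \le \mathrm{gl.dim}(K)+1$, and Morita equivalence transports the bound back to $L_K(E_i)$. A final application of Lemma~\ref{lem4.2} to the original graph $E$ completes the proof of~(1), and the parallel chain of weak-dimension statements gives~(2). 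The main subtlety I anticipate is the double application of Lemma~\ref{lem4.2}---first to $E$, then to the expansion $(E_i)_{v_i}$ in Case~C---which relies on Proposition~\ref{prop4.6} sitting in between to convert the obstructing self-loop at $v_i$ into a two-edge cycle that no single emitter can traverse.
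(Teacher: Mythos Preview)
Your proposal is correct and follows essentially the same route as the paper: lower bounds from Lemma~\ref{lem4.1}, then Lemma~\ref{lem4.2} reduces the upper bound to the single-emitter graphs $E_i$, which you handle by the same three-case split (no loop at $v_i$; a loop with no exit; a loop with an exit), invoking Corollary~\ref{cor3.5} in the first two cases and Proposition~\ref{prop4.6} followed by a second use of Lemma~\ref{lem4.2} in the third. The only difference is cosmetic: in Case~C the paper first passes to the subgraph $F$ of $E_i$ obtained by deleting the isolated vertices before applying Proposition~\ref{prop4.6}, whereas you apply it directly to $E_i$; since $E_i$ itself already satisfies Condition~(L) in this case, your shortcut is valid and slightly cleaner.
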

\begin{proof}
By Lemma \ref{lem4.1}, we immediately get that
$$
\mathrm{gl.dim}(K)\leq \mathrm{l.gl.dim}\left(L_K(E)\right)
\text{ and } \mathrm{w.gl.dim}(K)\leq \mathrm{w.gl.dim}\left(L_K(E)\right).$$

We denote by $\{v_1, v_2,\ldots, v_k\}\subseteq E^0$ the set of regular vertices. For each $1\leq i\leq k$, let $E_i$ denote the graph with the same vertices as $E$ and $v_i$ emits the same edges as it does in $E$, but all other vertices do not emit any edge. Then, by Lemma \ref{lem4.2}, we obtain that
$$
\mathrm{l.gl.dim}\left(L_K(E)\right)\leq \max\limits_{1 \le i \le k}\Bigl\{\mathrm{gl.dim}(K) +1,\ \mathrm{l.gl.dim}\left(L_K(E_i)\right)\Bigr\}
$$
and
$$
\mathrm{w.gl.dim}\left(L_K(E)\right)\leq \max\limits_{1 \le i \le k}\Bigl\{\mathrm{w.gl.dim}(K) +1,\ \mathrm{w.gl.dim}\left(L_K(E_i)\right)\Bigr\}.
$$

We next calculate the homological dimensions of $L_K(E_i)$ $(1\leq i \leq k)$. To do so, we consider the following cases:

\emph{Case 1}. $E$ has no any loop $f$ such that $s(f) = v_i$. Then it is easy to see that
$E_i$ is acyclic, and hence, $L_K(E_i)\cong \bigoplus\limits^{k_i}_{j=1}\mathbb{M}_{n_j}(K)$, by Corollary \ref{cor3.5}. Furthermore, since the homological dimensions are Morita invariants, we have that $\mathrm{l.gl.dim}(\mathbb{M}_{n_j}(K)) = \mathrm{gl.dim}(K)$ and $\mathrm{w.gl.dim}(\mathbb{M}_{n_j}(K)) =\mathrm{w.gl.dim}(K)$. These observations imply
$$
\mathrm{l.gl.dim}\left(L_K(E_i)\right) = \mathrm{gl.dim}(K) \text{ and } \mathrm{w.gl.dim}\left(L_K(E_i)\right) =\mathrm{w.gl.dim}(K).
$$

\emph{Case 2}. $E$ contains a loop $f$ having no an exit such that $s(f) = v_i$. We then have that $E_i$ contains a unique cycle $f$, and its other vertices are isolated, so
\[
L_K(E_i)\cong \bigoplus\limits_{j=1}^{n-1}K\oplus K[x, x^{-1}],
\]
where $n = |E^0|$, by Corollary \ref{cor3.5}. Also, by \cite[Theorem 1.7]{douglas:twgdogroar} and \cite[Theorem 2]{dicks:mvpocor}, $\mathrm{gl.dim}\left(K[x, x^{-1}]\right) = \mathrm{gl.dim}(K)+1$ and $\mathrm{w.gl.dim}\left(K[x, x^{-1}]\right) = \mathrm{w.gl.dim}(K)+1$, respectively. From these observations, we immediately get that
$$
\mathrm{l.gl.dim}\left(L_K(E_i)\right) = \mathrm{gl.dim}(K) + 1
$$
and
$$
\mathrm{w.gl.dim}\left(L_K(E_i)\right) =\mathrm{w.gl.dim}(K) + 1.
$$

\emph{Case 3}. $E$ contains a loop $f$ having an exit such that $s(f) = v_i$. Let $F$ be
the subgraph of $E_i$ as follows:
$$
F^0 := \left\{v_i, r(f)\mid f\in s^{-1}(v_i)\right\} \text{ and }
F^1 := s^{-1}(v_i).
$$

We then have that each vertex $w\in E^0\setminus F^0$ is an isolated vertex in $E_i$, and hence,
\[
L_K(E_i) \cong \bigoplus_{w\in E^0\setminus F^0}K \oplus L_K(F).
\]

It implies that $\mathrm{l.gl.dim}\left(L_K(E_i)\right) = \mathrm{l.gl.dim}\left(L_K(F)\right)$ and  $\mathrm{w.gl.dim}\left(L_K(E_i)\right) = \mathrm{w.gl.dim}\left(L_K(F)\right),$ by Lemma \ref{lem4.1}.

By $F$ satisfies Condition (L) and Proposition \ref{prop4.6}, $L_K(F)$ is Morita equivalent to $L_K(F_{v_i})$, so $\mathrm{l.gl.dim}\left(L_K(F)\right) = \mathrm{l.gl.dim}\left(L_K(F_{v_i})\right)$ and  $\mathrm{w.gl.dim}\left(L_K(F)\right) =
\mathrm{w.gl.dim}\left(L_K(F_{v_i})\right)$. Furthermore, it is easy to see that $F_{v_i}$ has no any loop, and $v_i$ and $v^*_i$ are only its regular vertices. Then, applying Lemma \ref{lem4.2} and Case 1, we immediately get that $\mathrm{l.gl.dim}\left(L_K(F_{v_i})\right)\leq \mathrm{gl.dim}(K) + 1$ and $\mathrm{w.gl.dim}\left(L_K(F_{v_i})\right)\leq \mathrm{w.gl.dim}(K) + 1$, and hence,
$$
\mathrm{l.gl.dim}\left(L_K(E_i)\right)\leq \mathrm{gl.dim}(K) + 1
$$
and
$$
\mathrm{w.gl.dim}\left(L_K(E_i)\right)\leq \mathrm{w.gl.dim}(K) + 1.
$$

From these observations, we get the statements, finishing the proof.
\end{proof}

We next consider a criterion for equalities in Theorem \ref{thm4.7} can be achieved. Notice that if $K$ is a commutative ring such that $\mathrm{gl.dim}(K) = \infty$ (resp., $\mathrm{w.gl.dim}(K) \\= \infty$), then we always have that $\mathrm{gl.dim}\left(L_K(E)\right) = \infty$ (resp., $\mathrm{w.gl.dim}\left(L_K(E)\right) = \infty$), by Lemma \ref{lem4.1}. So, we only consider this problem for the case when $K$ is a commutative ring of finite homological dimensions. The following result provides us with an answer to the problem.

\begin{thm}\label{thm4.8}
Let $R$ be a commutative unital algebra over the field $K$ such that $\mathrm{gl.dim}(R) < \infty$, and let
$E$ be a finite graph. Then the following conditions are equivalent:
\begin{itemize}
\item[(1)] $\mathrm{l.gl.dim}(L_R(E)) = \mathrm{gl.dim}(R)$;

\item[(2)] $E$ is acyclic.

\end{itemize}
\end{thm}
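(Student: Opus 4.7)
The plan is to prove the two implications separately. Direction $(2) \Rightarrow (1)$ follows immediately from Corollary \ref{cor3.5}, while $(1) \Rightarrow (2)$, handled by contrapositive, is reduced to the field case of Ara--Moreno--Pardo via the tensor-product identification $L_R(E) \cong R \otimes_K L_K(E)$.

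For $(2) \Rightarrow (1)$: if $E$ is acyclic, then $E$ is a (trivially) no-exit graph with no cycles, so Corollary \ref{cor3.5} (with $l = 0$) gives $L_R(E) \cong \bigoplus_{j=1}^{k} \mathbb{M}_{n_j}(R)$. Since global dimension is a Morita invariant and the global dimension of a finite direct product of rings equals the maximum of the components' global dimensions, $\mathrm{l.gl.dim}(L_R(E)) = \mathrm{gl.dim}(R)$; this matches the lower bound already provided by Lemma \ref{lem4.1}.

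For $(1) \Rightarrow (2)$, assume $E$ has a cycle; the goal is $\mathrm{l.gl.dim}(L_R(E)) \geq \mathrm{gl.dim}(R)+1$, the reverse inequality being Theorem \ref{thm4.7}. First, from the universal property of Leavitt path algebras together with the $K$-basis of $L_K(E)$ supplied by Theorem \ref{B-LPA}, I would record the natural isomorphism of $R$-algebras $L_R(E) \cong R \otimes_K L_K(E)$. Second, invoke \cite[Theorem 3.5]{amp:nktfga} and its refinement for graphs with cycles (which can also be verified via Corollary \ref{cor3.5} and Proposition \ref{prop4.6} applied over the field $K$) to get $\mathrm{l.gl.dim}(L_K(E)) = 1$, and fix a finitely presented $L_K(E)$-module $N$ with $\mathrm{pd}_{L_K(E)}(N) = 1$ whose projective resolution consists of finitely generated projectives, together with an $R$-module $M$ realising $\mathrm{pd}_R(M) = d := \mathrm{gl.dim}(R)$. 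The outer tensor product $M \otimes_K N$ is naturally a left $L_R(E)$-module, and since $K$ is a field, totalising the tensor product of the two projective resolutions yields a length-$(d+1)$ projective $L_R(E)$-resolution of $M \otimes_K N$, whence $\mathrm{pd}_{L_R(E)}(M \otimes_K N) \leq d+1$. A Künneth identification
\[
\mathrm{Ext}^{d+1}_{L_R(E)}\bigl(M \otimes_K N,\; M' \otimes_K N'\bigr) \;\supseteq\; \mathrm{Ext}^{d}_R(M, M') \otimes_K \mathrm{Ext}^{1}_{L_K(E)}(N, N'),
\]
for test modules $M', N'$ chosen so that both right-hand factors are nonzero, delivers the matching lower bound $\mathrm{pd}_{L_R(E)}(M \otimes_K N) \geq d+1$, hence $\mathrm{l.gl.dim}(L_R(E)) > \mathrm{gl.dim}(R)$.

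The main obstacle will be the Künneth step: this requires finite generation of the projectives in the $L_K(E)$-resolution of $N$ (so that $\mathrm{Hom}$ commutes with $\otimes_K$) and an argument for the existence of $M', N'$ making both Ext-factors nonzero (which follows from $\mathrm{pd}_R(M) = d$ and $\mathrm{pd}_{L_K(E)}(N) = 1$ together with exactness of $\otimes_K$ over the field $K$). If the Künneth argument proves too delicate, a self-contained fallback is to argue structurally: extract from $L_R(E)$ a direct summand (or quotient) Morita-equivalent to $\mathbb{M}_m(R[x, x^{-1}])$ arising from a cycle in $E$ via Proposition \ref{prop4.6}, Lemma \ref{lem4.2} and Corollary \ref{cor3.5}, and then apply $\mathrm{gl.dim}(R[x,x^{-1}]) = \mathrm{gl.dim}(R)+1$ from \cite[Theorem 1.7]{douglas:twgdogroar} together with Lemma \ref{lem4.1} to conclude.
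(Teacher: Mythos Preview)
Your direction $(2)\Rightarrow(1)$ is exactly the paper's argument.

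For $(1)\Rightarrow(2)$ you share the paper's starting point, the identification $L_R(E)\cong R\otimes_K L_K(E)$, but then diverge. The paper simply invokes the Eilenberg--Rosenberg--Zelinsky tensor-product inequality (a consequence of \cite[Theorem~XI.3.1]{ce:ha}) to get
\[
\mathrm{l.gl.dim}(R\otimes_K L_K(E))\ \ge\ \mathrm{gl.dim}(R)+\mathrm{w.gl.dim}(L_K(E)),
\]
so equality in (1) forces $\mathrm{w.gl.dim}(L_K(E))=0$, i.e.\ $L_K(E)$ is von~Neumann regular, and then the Abrams--Rangaswamy criterion \cite{ar:rcfalpa} yields that $E$ is acyclic. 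Your K\"unneth computation is essentially a hand-made special case of the ERZ inequality; it works in principle, but you have to manufacture a finitely presented $L_K(E)$-module $N$ of projective dimension exactly $1$ and justify the $\mathrm{Hom}$/$\otimes$ interchange, whereas the paper's route needs only the two citations. What your approach buys is self-containment (no appeal to \cite{erz:otdomaadotp} or \cite{ar:rcfalpa}); what the paper's buys is a two-line proof.

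Your proposed fallback, however, does not work as stated. When $E$ has a cycle \emph{with an exit}, $L_R(E)$ generally has no direct summand Morita equivalent to $\mathbb{M}_m(R[x,x^{-1}])$ (think of the Toeplitz graph, Example~\ref{Toeplitz}), and passing to a \emph{quotient} cannot give a lower bound on global dimension. Lemma~\ref{lem4.1} only helps when the smaller ring sits inside as a module direct summand, which is not what Proposition~\ref{prop4.6} and Lemma~\ref{lem4.2} produce. So if the K\"unneth step stalls, the right fix is to cite the ERZ inequality rather than this structural route.
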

\begin{proof}
(1)$\Longrightarrow$(2). Assume that $\mathrm{l.gl.dim}(L_R(E)) = \mathrm{gl.dim}(R)$. We will prove that $E$ is acyclic. Indeed, we first have that $L_R(E) \cong R\otimes_K L_K(E)$ as $R$-algebras, by \cite[Theorem 8.1]{tomf:lpawciacr}. But then, by  \cite[Proposition 10(2)]{erz:otdomaadotp}, which is an immediate consequence of \cite[Theorem XI. 3.1, page 209]{ce:ha}, we get that
$$
\mathrm{l.gl.dim}(L_R(E)) = \mathrm{l.gl.dim}(R\otimes_K L_K(E))\geq \mathrm{gl.dim}(R) + \mathrm{w.gl.dim}(L_K(E)),
$$
so $\mathrm{w.gl.dim}(L_K(E)) = 0$. This implies that $L_K(E)$ is a (von Neumann) regular ring
(see, e.g., \cite[Example 5.62a, page 185]{l:lomar}). Therefore, $E$ is an acyclic graph, by using \cite[Theorem 1]{ar:rcfalpa}.

(2)$\Longrightarrow$(1). Assume that $E$ is an acyclic graph. Then, by Corollary \ref{cor3.5}, $L_R(E)$ is isomorphic to $\bigoplus\limits_{j=1}^k\mathbb{M}_{n_j}(R)$, where $k$ is the number of all sinks (say $\{v_1, \ldots, v_k\}$), and $n_j$ is the number of paths ending in the sink $v_j$.
Furthermore, it is known that $\mathrm{l.gl.dim}\left(\mathbb{M}_{n_j}(R)\right)= \mathrm{gl.dim}(R)$, and hence, $\mathrm{l.gl.dim}(L_R(E))= \mathrm{gl.dim}(R)$, finishing the proof.

\end{proof}

\begin{rem}
~\
\begin{itemize}
\item[(1)] The analogous of Theorem \ref{thm4.8} is equally true for weak global dimension. It is done similarly to the proof of Theorem \ref{thm4.8} and using the fact that if $K$ is a field and $S$ and $T$ are unital $K$-algebras, then $\mathrm{w.gl.dim}(S) + \mathrm{w.gl.dim}(T)\leq \mathrm{w.gl.dim}(S\otimes_kT)$, which is an immediate consequence of \cite[Remark 1]{erz:otdomaadotp}.
\item[(2)] Let $R$ be a commutative ring as in Theorem \ref{thm4.8}, and $E$ a finite graph. Theorem \ref{thm4.8} then shows that
    \begin{equation*}
    \mathrm{l.gl.dim}(L_R(E)) =  \left\{
    \begin{array}{lcl}
    \mathrm{gl.dim}(R),&  & \text{if }E \text{ is acyclic}, \\
    &  &  \\
    \mathrm{gl.dim}(R) + 1,&  & \text{otherwise. \ \ }%
    \end{array}%
    \right.
    \end{equation*}%
    The analogous result holds for weak global dimension.
\item[(3)] In \cite[Theorem 3.5]{amp:nktfga}, the authors proved that if $K$ is a field and $E$ is a finite graph, then $\mathrm{l.gl.dim}(L_K(E))\leq 1$, by using the nice machinery developed by Bergman \cite[Theorem 5.2]{b:casurc}. As an immediate consequence of Theorem \ref{thm4.7}, we may give another proof for this result.
\end{itemize}
\end{rem}

\paragraph{{\bf Acknowledgements.}} The second author is supported by the Vietnam National Foundation for Science and Technology Development (NAFOSTED) under Grant 101.04-2017.19. The authors take an opportunity to express their deep gratitude to Prof. Leonid A. Bokut and Prof. P.N. \'Anh (the Alfr\'ed R\'enyi Mathematical Institute, Hungarian Academy of Sciences) for their valuable suggestions in order to give the final shape of the paper.

\vskip 0.5 cm \vskip 0.5cm {

\end{document}